\newtheorem{thm}{Theorem}
\newtheorem{lem}[thm]{Lemma}
\newtheorem{cor}[thm]{Corollary}
\theoremstyle{definition}
\newtheorem{ex}[thm]{Example}
\providecommand{\abs}[1]{\lvert#1\rvert}
\newcommand{\superimpose}[2]{%
  {\ooalign{$#1\@firstoftwo#2$\cr\hfil$#1\@secondoftwo#2$\hfil\cr}}}
\begin{document}
\title[Finitely additive mass transportation]{Finitely additive mass transportation}

\author{Pietro Rigo}
\address{Pietro Rigo, Dipartimento di Scienze Statistiche ``P. Fortunati'', Universit\`a di Bologna, via delle Belle Arti 41, 40126 Bologna, Italy}
\email{pietro.rigo@unibo.it}

\keywords{Coupling, Duality theorem, Finitely additive probability, Martingale, Mass transportation.}

\subjclass[2020]{60A10, 60E05, 60G42, 28A35, 28C05, 49N05.}

\begin{abstract}
Some classical mass transportation problems are investigated in a finitely additive setting. Let $\Omega=\prod_{i=1}^n\Omega_i$ and $\mathcal{A}=\otimes_{i=1}^n\mathcal{A}_i$, where $(\Omega_i,\mathcal{A}_i,\mu_i)$ is a ($\sigma$-additive) probability space for $i=1,\ldots,n$. Let $c:\Omega\rightarrow [0,\infty]$ be an $\mathcal{A}$-measurable cost function. Let $M$ be the collection of finitely additive probabilities on $\mathcal{A}$ with marginals $\mu_1,\ldots,\mu_n$. If couplings are meant as elements of $M$, most classical results of mass transportation theory, including duality and attainability of the Kantorovich inf, are valid without any further assumptions. Special attention is devoted to martingale transport. Let $(\Omega_i,\mathcal{A}_i)=(\mathbb{R},\mathcal{B}(\mathbb{R}))$ for all $i$ and
$$M_1=\bigl\{P\in M:P\ll P^*\text{ and }(\pi_1,\ldots,\pi_n)\text{ is a }P\text{-martingale}\}$$
where $P^*$ is a reference probability on $\mathcal{A}$. If $M_1\ne\emptyset$, then
$$\int c\,dP=\inf_{Q\in M_1}\int c\,dQ\quad\quad\text{for some }P\in M_1.$$
Conditions for $M_1\ne\emptyset$ are given as well.
\end{abstract}

\maketitle

\section{Introduction}\label{intro}

\noindent Mass transportation is nowadays a dynamic field of research. Its applications range in a number of fields, including probability theory, differential equations, geometric measure theory, economics and finance; see e.g. \cite{AGS}, \cite{RRLIBRO}, \cite{VILLA}.

\medskip

\noindent This paper deals with mass transportation problems when couplings are finitely additive probabilities. To be more precise, and to highlight similarities and differences between our approach and the usual one, we need to recall the standard framework where transportation problems are investigated. In the sequel, the abbreviation {\em f.a.p.} stands for {\em finitely additive probability} and a {\em probability measure} is a $\sigma$-{\em additive} f.a.p. Moreover, we use the notation
$$P(f)=\int f\,dP$$
whenever $P$ is a f.a.p. and $f$ a function such that $\int f\,dP$ is well defined.

\subsection{The standard framework}\label{z34d8ng1q}
Let $I=\{1,\ldots,n\}$ where $n$ is a positive integer. For each $i\in I$, let $(\Omega_i,\mathcal{A}_i)$ be a measurable space and $\mu_i$ a probability measure on the $\sigma$-field $\mathcal{A}_i$. Define
\begin{gather*}
\Omega=\Omega_1\times\ldots\times\Omega_n\quad\text{and}\quad\mathcal{A}=\mathcal{A}_1\otimes\ldots\otimes\mathcal{A}_n
\end{gather*}
and denote by $\pi_i:\Omega\rightarrow\Omega_i$ the $i$-th canonical projection, namely,
$$\pi_i(\omega)=\omega_i\quad\quad\text{for all }i\in I\text{ and }\omega=(\omega_1,\ldots,\omega_n)\in\Omega.$$
Moreover, let $c:\Omega\rightarrow [-\infty,\infty]$ be a (cost) function. Various conditions on $c$ can be taken into account. In this paper, $c$ {\em is $\mathcal{A}$-measurable and takes values in $[0,\infty]$. In particular, it may be that $c=\infty$}.

\medskip

\noindent A {\em coupling} (or a {\em transport plan}) is a probability measure $P$ on $\mathcal{A}$ having $\mu_1,\ldots,\mu_n$ as marginals, in the sense that
$$P\circ \pi_i^{-1}=\mu_i\quad\text{for all }i\in I.$$
The collection of all couplings, henceforth denoted by $\Gamma$, plays a basic role. A few classical issues are:

\medskip

\begin{itemize}

\item[(i)] Give conditions for the existence of $P\in\Gamma$ such that
\begin{gather}\label{kant}
P(c)=\inf_{Q\in\Gamma}Q(c);
\end{gather}

\medskip

\item[(ii)] Characterize those $P\in\Gamma$ satisfying equation \eqref{kant} (provided they exist);

\medskip

\item[(iii)] Give conditions for the duality relation
\begin{gather*}
\inf_{Q\in\Gamma}Q(c)=\sup_{f_1,\ldots,f_n}\,\sum_{i=1}^n\mu_i(f_i)
\end{gather*}
where $\sup$ is over the $n$-tuple $(f_1,\ldots,f_n)$ such that $f_i\in L_1(\mu_i)$ for all $i\in I$ and $\sum_{i=1}^nf_i\circ\pi_i\le c$.

\end{itemize}

\medskip

\noindent Here, $L_1(\mu_i)=L_1(\Omega_i,\mathcal{A}_i,\mu_i)$ is the class of $\mathcal{A}_i$-measurable functions $f:\Omega_i\rightarrow\mathbb{R}$ such that $\mu_i(\abs{f})=\int\abs{f}\,d\mu_i<\infty$ (without identifying maps which agree $\mu_i$-a.s.).

\medskip

\noindent A natural development is to fix a subset $\Gamma_0\subset\Gamma$ and to investigate (i)-(ii)-(iii) (and possibly other problems) with $\Gamma_0$ in the place of $\Gamma$. Following \cite{ZAEV}, for instance, $\Gamma_0$ could be
\begin{gather*}
\Gamma_0=\bigl\{P\in\Gamma:P(\abs{f})<\infty\text{ and }P(f)=0\text{ for all }f\in F\bigr\}
\end{gather*}
where $F$ is a given class of $\mathcal{A}$-measurable functions $f:\Omega\rightarrow\mathbb{R}$. As a special case, when $(\Omega_i,\mathcal{A}_i)=(\mathbb{R},\mathcal{B}(\mathbb{R}))$ for each $i\in I$, a suitable choice of $F$ yields
\begin{gather*}
\Gamma_0=\bigl\{P\in\Gamma:(\pi_1,\ldots,\pi_n)\text{ is a martingale under }P\bigr\}.
\end{gather*}
Such a $\Gamma_0$, introduced in \cite{BHLP2013}, corresponds to the so-called {\em martingale transport}. In addition to be theoretically intriguing, martingale transport has solid financial motivations; see e.g. \cite{BHLP2013}, \cite{BNT2017}, \cite{GHLT2014} and references therein.

\medskip

\noindent A last remark is that, for any choice of $\Gamma_0$, a preliminary question is whether $\Gamma_0\ne\emptyset$. In martingale transport, for instance, as a consequence of some results by Strassen \cite{STR}, one obtains $\Gamma_0\ne\emptyset$ if and only if
$$\int\abs{x}\,\mu_i(dx)<\infty\quad\text{and}\quad\mu_i(f)\le\mu_{i+1}(f)$$
for all $i\in I$ and all convex functions $f:\mathbb{R}\rightarrow\mathbb{R}$.

\medskip

\subsection{Content of this paper} Investigating mass transportation in a finitely additive setting is a quite natural idea, and various hints in this direction are scattered throughout the literature; see e.g. \cite{EKSO}, \cite{LUTH}, \cite{RRLIBRO}, \cite{RUS1991} and references therein. Usually, however, f.a.p.'s are only instrumental. Typically, a result concerning f.a.p.'s is nothing but an intermediate step toward the corresponding $\sigma$-additive result. Instead, f.a.p.'s have an intrinsic interest in several mass transportation problems; see e.g. Examples \ref{f90h4qj7}-\ref{ab6ym1l}. Nevertheless, to the best of our knowledge, no systematic treatment of the finitely additive mass transportation is available to date. This paper aims to fill this gap in the special case where $\mu_1,\ldots,\mu_n$ are probability measures.

\medskip

\noindent Let
$$\mathbb{P}=\bigl\{\text{all f.a.p.'s on }\mathcal{A}\bigr\}$$
and
$$M=\bigl\{P\in\mathbb{P}:P\circ \pi_i^{-1}=\mu_i\text{ for all }i\in I\bigr\}.$$
In this paper, problems (i)-(ii)-(iii) are investigated with $M$ in the place of $\Gamma$. Similarly, the subsets $\Gamma_0\subset\Gamma$ are replaced by the corresponding subsets $M_0\subset M$.

\medskip

\noindent Our main result is that, if $\Gamma$ is replaced by $M$, each of problems (i)-(ii)-(iii) admits a solution assuming only that $c$ is non-negative and $\mathcal{A}$-measurable. On the contrary,  to have a solution in the standard framework, further conditions on $c$ and/or $\mu_1,\ldots,\mu_n$ are needed (such conditions are recalled at the end of Subsection \ref{um7m}).

\medskip

\noindent Special attention is devoted to martingale transport. To illustrate, suppose $(\Omega_i,\mathcal{A}_i)=(\mathbb{R},\mathcal{B}(\mathbb{R}))$ for each $i\in I$ and define
$$M_0=\bigl\{P\in M:(\pi_1,\ldots,\pi_n)\text{ is a }P\text{-martingale}\}.$$
If $M_0\ne\emptyset$, then $P(c)=\inf_{Q\in M_0}Q(c)$ for some $P\in M_0$. Similarly, fix a reference f.a.p. $P^*\in\mathbb{P}$ and define
$$M_1=\bigl\{P\in M:P\ll P^*\text{ and }(\pi_1,\ldots,\pi_n)\text{ is a }P\text{-martingale}\}.$$
Once again, if $M_1\ne\emptyset$, there is $P\in M_1$ such that $P(c)=\inf_{Q\in M_1}Q(c)$.

\medskip

\noindent A remark on $M_1$ is in order. Suppose $P^*$ is a probability measure. In the standard martingale transport, to our knowledge, the set
$$M_1\cap\Gamma=\bigl\{P\in\Gamma:P\ll P^*\text{ and }(\pi_1,\ldots,\pi_n)\text{ is a }P\text{-martingale}\}$$
is largely neglected. The only partial exception is \cite{KORMCC}, which is focused on
$$\Gamma_1=\bigl\{P\in\Gamma:P\le u\,P^*\bigr\}\quad\quad\text{where }u>0\text{ is a given constant.}$$
There are two main differences between such a $\Gamma_1$ and $M_1\cap\Gamma$. The elements of $\Gamma_1$ need not be martingale probability measures, and $P\ll P^*$ is strengthened into $P\le u\,P^*$ (that is, not only $P$ has a density with respect to $P^*$, but this density is bounded by a {\em given} constant $u$). However, to investigate $M_1\cap\Gamma$ looks quite natural. In fact, the usual motivation for martingale transport is that martingale probability measures play a role in various financial problems. But, in most such problems, probability measures are also required to be equivalent, or at least absolutely continuous, with respect to some $P^*$. To sum up, to focus on $M_1\cap\Gamma$ is reasonable in the standard martingale transport. In turn, in the framework of this paper, it is natural to focus on $M_1$.

\medskip

\noindent The result quoted above requires $M_1\ne\emptyset$. This is investigated in the last part of the paper. Suppose $P^*$ is a probability measure with compact support and define
$$\mathcal{U}=\bigl\{Q\in\mathbb{P}:Q\ll P^*\text{ and }(\pi_1,\ldots,\pi_n)\text{ is a }Q\text{-martingale}\bigr\}.$$
Then, $M_1\ne\emptyset$ if and only if
\begin{gather*}
\sum_{i=1}^n\mu_i(f_i)\ge\inf_{Q\in\mathcal{U}}Q\Bigl(\,\sum_{i=1}^nf_i\circ\pi_i\Bigr)
\end{gather*}
for all bounded Borel functions $f_1,\ldots,f_n:\mathbb{R}\rightarrow\mathbb{R}$. This result admits a $\sigma$-additive version as well. In fact, if $P^*$ has discrete marginals (except possibly one), the above condition implies the existence of $P\in\Gamma$ such that $P\ll P^*$ and $(\pi_1,\ldots,\pi_n)$ is a $P$-martingale.

\section{Preliminaries}\label{sect2}

\noindent For any measurable space $(S,\mathcal{E})$, we denote by $B(S,\mathcal{E})$ the set of bounded $\mathcal{E}$-measurable functions $f:S\rightarrow\mathbb{R}$. If $P,\,Q\in\mathbb{P}$, we write $P\ll Q$ to mean that $P(A)=0$ whenever $A\in\mathcal{A}$ and $Q(A)=0$. Moreover, if $f_i:\Omega_i\rightarrow\mathbb{R}$, the map $\sum_if_i\circ\pi_i$ is denoted by $\oplus_if_i$. Hence, $\oplus_if_i$ is the function on $\Omega$ defined by
$$\bigl(\oplus_{i=1}^nf_i\bigr)(\omega)=\sum_{i=1}^nf_i(\omega_i)\quad\quad\text{for all }\omega=(\omega_1,\ldots,\omega_n)\in\Omega.$$

\medskip

\subsection{Integrals with respect to f.a.p.'s}\label{e4n8h5} Let $P\in\mathbb{P}$ and $f:\Omega\rightarrow\mathbb{R}$ a real-valued $\mathcal{A}$-measurable function. If $f$ is simple, $\int f\,dP$ is defined in the usual way. If $f$ is bounded, $f$ is the uniform limit of a sequence $f_k$ of simple functions, and we let $\int f\,dP=\lim_k\int f_k\,dP$. If $f\ge 0$, then $f$ is $P$-integrable if and only if
$$\sup_k\int f\wedge k\,dP<\infty,$$
and we let
\begin{gather}\label{n6t4}
\int f\,dP=\sup_k\int f\wedge k\,dP.
\end{gather}
In general, $f$ is $P$-integrable if and only if $f^+$ and $f^-$ are both $P$-integrable, or equivalently $\int\abs{f}\,dP<\infty$, and in this case $\int f\,dP=\int f^+\,dP-\int f^-\,dP$. Observe now that equation \eqref{n6t4} makes sense whenever $0\le f\le\infty$, merely $\int f\,dP=\infty$ if $f$ is real-non-negative but not $P$-integrable or if $P(f=\infty)>0$. Hence, $\int f\,dP$ is always defined by \eqref{n6t4} whenever $f$ is $\mathcal{A}$-measurable and takes values in $[0,\infty]$. In a nutshel, this is a concise summary on finitely additive integration (when the integrand $f$ is measurable with respect to a $\sigma$-field). For more on this subject, we refer to \cite{BRBR}. As already noted, we will use the notation $P(f)=\int f\,dP$ whenever $\int f\,dP$ is well defined.

\subsection{Finitely additive martingales}\label{nj887c4d6h} Let $(\Omega_i,\mathcal{A}_i)=(\mathbb{R},\mathcal{B}(\mathbb{R}))$ for all $i\in I$. Define a class $H$ of real-valued functions on $\Omega=\mathbb{R}^n$ as
$$H=\{\pi_1\}\cup\bigl\{\bigl(\pi_{i+1}-\pi_i\bigr)\,g(\pi_1,\ldots,\pi_i):1\le i<n,\,\,g\in B(\mathbb{R}^i,\mathcal{B}(\mathbb{R}^i))\bigr\}.$$
If $P$ is a probability measure on $\mathcal{A}$, then $(\pi_1,\ldots,\pi_n)$ is a $P$-martingale if

\begin{itemize}

\item[(a)] $P(\abs{\pi_i})<\infty$ for all $i\in I$,

\item[(b)] $P(\pi_1)=0$ and
$$E_P\bigl(\pi_{i+1}\mid \pi_1,\ldots,\pi_i\bigr)=\pi_i,\,\,P\text{-a.s., for all }i<n.$$

\end{itemize}

\noindent However, under (a), condition (b) is equivalent to

\medskip

\begin{itemize}

\item[(c)] $P(f)=0$ for each $f\in H$.

\end{itemize}

\medskip

\noindent Therefore, if $P\in\mathbb{P}$ is a f.a.p. on $\mathcal{A}$, then $(\pi_1,\ldots,\pi_n)$ is said to be a $P$-martingale if conditions (a) and (c) are satisfied; see  \cite{BPR2013}-\cite{BPR2015}.

\subsection{The product topology on $[0,1]^\mathcal{A}$}\label{b4rt6} Let $[0,1]^\mathcal{A}$ be the collection of all functions $P:\mathcal{A}\rightarrow [0,1]$. When equipped with the product topology, $[0,1]^\mathcal{A}$ is a compact Hausdorff space and convergence is setwise convergence. Precisely, a net $(P_\alpha)$ in $[0,1]^\mathcal{A}$ converges to $P\in [0,1]^\mathcal{A}$ if and only if
$$P(A)=\lim_\alpha P_\alpha(A)\text{ for all }A\in\mathcal{A}.$$
In particular, $\mathbb{P}$ and $M$ are compact subsets of $[0,1]^\mathcal{A}$. Moreover, if $f\in B(\Omega,\mathcal{A})$, the map $P\mapsto P(f)$ is continuous on $\mathbb{P}$. In the sequel, $[0,1]^\mathcal{A}$ {\em is given the product topology and all its subsets are equipped with the corresponding relative topology}.

\section{Finitely additive couplings}\label{gu4n77y}

\noindent In this section, $c:\Omega\rightarrow [0,\infty]$ is an $\mathcal{A}$-measurable map (the value $\infty$ is admissible for $c$). Recall also that $\mathbb{P}$ denotes the set of all f.a.p.'s on $\mathcal{A}$ and $M$ the collection of those $P\in\mathbb{P}$ with marginals $\mu_1,\ldots,\mu_n$.

\medskip

\subsection{Results}\label{um7m} We begin with a preliminary lemma. Fix a collection $F$ of real-valued $\mathcal{A}$-measurable functions on $\Omega$ and define
$$\mathcal{K}(F)=\bigl\{P\in M:P(\abs{f})<\infty\text{ and }P(f)=0\text{ for all }f\in F\bigr\}.$$

\begin{lem}\label{j91s4f}
$\mathcal{K}(F)$ is compact provided
\begin{gather}\label{b7t9}
\lim_k\sup_{P\in M}P\bigl\{\abs{f}\,1(\abs{f}>k)\bigr\}=0\quad\quad\text{for each }f\in F.
\end{gather}
Moreover, $P\mapsto P(c)$ is a lower semicontinuous map from $\mathbb{P}$ into $[0,\infty]$.
\end{lem}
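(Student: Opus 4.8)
The statement has two parts: (1) $\mathcal{K}(F)$ is compact under condition \eqref{b7t9}, and (2) $P\mapsto P(c)$ is lower semicontinuous on $\mathbb{P}$. I would handle them separately.

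\medskip

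\noindent\emph{Lower semicontinuity of $P\mapsto P(c)$.} By the definition of the finitely additive integral for a nonnegative measurable integrand, $P(c)=\sup_k P(c\wedge k)$. Each $c\wedge k$ is bounded and $\mathcal{A}$-measurable, so by the last sentence of Subsection~\ref{b4rt6} the map $P\mapsto P(c\wedge k)$ is continuous on $\mathbb{P}$. A pointwise supremum of continuous functions is lower semicontinuous, so $P\mapsto P(c)$ is l.s.c.\ as a map into $[0,\infty]$. This part is routine and I expect no obstacle.

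\medskip

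\noindent\emph{Compactness of $\mathcal{K}(F)$.} Since $\mathbb{P}$ is compact in the product topology on $[0,1]^\mathcal{A}$ and $M$ is closed in $\mathbb{P}$ (being defined by the equalities $P(A\times\prod_{j\ne i}\Omega_j)=\mu_i(A)$, each a continuous constraint), it suffices to show $\mathcal{K}(F)$ is closed in $M$. Fix $f\in F$ and consider a net $(P_\alpha)$ in $\mathcal{K}(F)$ with $P_\alpha\to P$ setwise, $P\in M$. I need $P(\abs{f})<\infty$ and $P(f)=0$. For the first, use the same truncation idea: for fixed $k$, $P(\abs f\wedge k)=\lim_\alpha P_\alpha(\abs f\wedge k)\le\limsup_\alpha P_\alpha(\abs f)$; but $P_\alpha(\abs f)$ need not be uniformly bounded a priori, so instead write $P_\alpha(\abs f)=P_\alpha(\abs f\wedge k)+P_\alpha(\abs f\,1(\abs f>k))$ and bound the tail uniformly by \eqref{b7t9}: given $\varepsilon>0$ pick $k$ with $\sup_{Q\in M}Q(\abs f\,1(\abs f>k))<\varepsilon$, so $P(\abs f\wedge k)\le P_\alpha(\abs f)\le P_\alpha(\abs f\wedge k)+\varepsilon$ — wait, I actually want to bound $P(\abs f)$ from above, so: $P(\abs f)=\sup_m P(\abs f\wedge m)$, and for each $m\ge k$, $P(\abs f\wedge m)=\lim_\alpha P_\alpha(\abs f\wedge m)\le\lim_\alpha\bigl(P_\alpha(\abs f\wedge k)+\varepsilon\bigr)=P(\abs f\wedge k)+\varepsilon<\infty$ using $P_\alpha(\abs f)=0$-integrability is not needed, only $P_\alpha(\abs f\wedge m)\le P_\alpha(\abs f\wedge k)+P_\alpha(\abs f\,1(\abs f>k))\le P_\alpha(\abs f\wedge k)+\varepsilon$. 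Taking $\sup_m$ gives $P(\abs f)\le P(\abs f\wedge k)+\varepsilon<\infty$.

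\medskip

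\noindent For $P(f)=0$: split $f=f\,1(\abs f\le k)+f\,1(\abs f>k)$. The truncated part $f\,1(\abs f\le k)$ is bounded and $\mathcal{A}$-measurable, hence $P_\alpha(f\,1(\abs f\le k))\to P(f\,1(\abs f\le k))$. The remainder satisfies $\abs{P_\alpha(f\,1(\abs f>k))}\le P_\alpha(\abs f\,1(\abs f>k))<\varepsilon$ for $k$ large, uniformly in $\alpha$, and similarly $\abs{P(f\,1(\abs f>k))}\le\varepsilon$ (the bound \eqref{b7t9} applies to $P\in M$ too). Since $P_\alpha(f)=0$, we get $\abs{P(f)}\le\abs{P(f\,1(\abs f\le k))-P_\alpha(f\,1(\abs f\le k))}+\abs{P(f\,1(\abs f>k))}+\abs{P_\alpha(f\,1(\abs f>k))}<\abs{P(f\,1(\abs f\le k))-P_\alpha(f\,1(\abs f\le k))}+2\varepsilon$; letting $\alpha\to\infty$ and then $\varepsilon\to 0$ yields $P(f)=0$. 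Hence $P\in\mathcal{K}(F)$, so $\mathcal{K}(F)$ is closed, thus compact.

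\medskip

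\noindent\emph{Expected main obstacle.} The only subtle point is the interplay between setwise convergence and the truncation argument for the \emph{unbounded} integrand $\abs f$: one cannot pass $P_\alpha(\abs f)\to P(\abs f)$ directly, and the uniform tail bound \eqref{b7t9} is exactly what rescues both the finiteness $P(\abs f)<\infty$ and the vanishing $P(f)=0$. I would make sure the estimate is organized so that $k$ is chosen first (depending only on $\varepsilon$, via the uniform bound over all of $M$), and only then is the limit in $\alpha$ taken. Everything else is bookkeeping.
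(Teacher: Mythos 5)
Your proposal is correct and follows essentially the same route as the paper: reduce compactness to closedness of $\mathcal{K}(F)$ in the compact set $M$, use the truncation $f=f\,1(\abs f\le k)+f\,1(\abs f>k)$ together with the uniform tail bound \eqref{b7t9} (choosing $k$ before passing to the limit in $\alpha$) to get $P(\abs f)<\infty$ and $P(f)=0$, and obtain lower semicontinuity from $P(c)=\sup_kP(c\wedge k)$ with each $P\mapsto P(c\wedge k)$ continuous. The only cosmetic difference is that the paper bounds $P(\abs f)$ directly by applying \eqref{b7t9} to $P\in M$ itself rather than passing through the net, which you also note is available.
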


\begin{proof}
To make the notation easier, write $\mathcal{K}$ instead of $\mathcal{K}(F)$. Since $\mathcal{K}\subset M$ and $M$ is compact, it suffices to show that $\mathcal{K}$ is closed. Let $(P_\alpha)$ be a net in $\mathcal{K}$ such that $P_\alpha\rightarrow P$ for some $P\in [0,1]^\mathcal{A}$. Since $M$ is closed, $P\in M$.

\medskip

\noindent Let $f\in F$. We have to show that $P(\abs{f})<\infty$ and $P(f)=0$. By \eqref{b7t9}, there is $k$ such that $Q\bigl\{\abs{f}\,1(\abs{f}>k)\bigr\}\le 1$ for each $Q\in M$. Hence, $P\in M$ implies
$$P(\abs{f})\le k+P\bigl\{\abs{f}\,1(\abs{f}>k)\bigr\}\le k+1.$$
Next, for fixed $\alpha$, one obtains $P_\alpha(f)=0$ (due to $P_\alpha\in\mathcal{K}$) and
\begin{gather*}0=P_\alpha(f)=P_\alpha\bigl\{f\,1(\abs{f}\le k)\bigr\}+P_\alpha\bigl\{f\,1(\abs{f}>k)\bigr\}
\\\le P_\alpha\bigl\{f\,1(\abs{f}\le k)\bigr\}+\sup_{Q\in M}Q\bigl\{\abs{f}\,1(\abs{f}>k)\bigr\}\quad\quad\text{for all }k.
\end{gather*}
Moreover, for fixed $k$,
$$\lim_\alpha P_\alpha\bigl\{f\,1(\abs{f}\le k)\bigr\}=P\bigl\{f\,1(\abs{f}\le k)\bigr\}.$$
It follows that
\begin{gather*}
0\le P\bigl\{f\,1(\abs{f}\le k)\bigr\}+\sup_{Q\in M}Q\bigl\{\abs{f}\,1(\abs{f}>k)\bigr\}
\le P(f)+2\,\sup_{Q\in M}Q\bigl\{\abs{f}\,1(\abs{f}>k)\bigr\}.
\end{gather*}
Hence, $P(f)\ge 0$ because of \eqref{b7t9}. Similarly, one obtains $P(f)\le 0$. Thus $P(f)=0$, and this proves that $\mathcal{K}$ is closed. Finally, to show that $P\mapsto P(c)$ is lower semicontinuous on $\mathbb{P}$, it suffices to note that $P\mapsto P(c\wedge k)$ is continuous for fixed $k$. Therefore, lower semicontinuity of $P\mapsto P(c)$ follows from
$$P(c)=\sup_kP(c\wedge k)\quad\quad\text{for all }P\in\mathbb{P}.$$
\end{proof}

\noindent Condition \eqref{b7t9} is a form of uniform integrability. Since
$$P\bigl\{\abs{f}\,1(\abs{f}>k)\bigr\}\le k^{-\epsilon}\,P\bigl\{\abs{f}^{1+\epsilon}\bigr\},$$
a sufficient condition for \eqref{b7t9} is that, for each $f\in F$, there is $\epsilon>0$ such that $\sup_{P\in M}P\bigl\{\abs{f}^{1+\epsilon}\bigr\}<\infty$. In particular, condition \eqref{b7t9} holds whenever $F\subset B(\Omega,\mathcal{A})$. In any case, since $P\mapsto P(c)$ is lower semicontinuous, one obtains
\begin{gather}\label{cb9u4}
P(c)=\inf_{Q\in K}Q(c)\quad\quad\text{for some }P\in K
\end{gather}
provided $K\subset M$ is compact and non-empty. Thus, with reference to problem (i), the $\inf$ is attained for {\em any} non-negative $\mathcal{A}$-measurable cost function $c$ provided $\Gamma$ is replaced by a non-empty compact $K\subset M$. The next result highlights some meaningful special cases.

\begin{thm}\label{v77h3s1z}
Let $P^*\in\mathbb{P}$ and
$$K_1=\bigl\{P\in M:P\ll P^*\bigr\}.$$
If $(\Omega_i,\mathcal{A}_i)=(\mathbb{R},\mathcal{B}(\mathbb{R}))$ for each $i\in I$, define also
\begin{gather*}
K_2=\bigl\{P\in M:(\pi_1,\ldots,\pi_n)\text{ is a }P\text{-martingale}\bigr\}\quad\quad\text{and}
\\K_3=\bigl\{P\in M:P\ll P^*\text{ and }(\pi_1,\ldots,\pi_n)\text{ is a }P\text{-martingale}\bigr\}.
\end{gather*}
Then, equation \eqref{cb9u4} holds with $K=M$. Moreover, for each $j=1,2,3$, equation \eqref{cb9u4} holds with $K=K_j$ provided $K_j\ne\emptyset$.
\end{thm}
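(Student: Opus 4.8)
The plan is to reduce everything to the general principle already recorded in equation \eqref{cb9u4}: once we know that a subset $K\subseteq M$ is non-empty and compact in the product topology on $[0,1]^{\mathcal{A}}$, the lower semicontinuity of $P\mapsto P(c)$ (the second assertion of Lemma \ref{j91s4f}) together with compactness forces the infimum $\inf_{Q\in K}Q(c)$ to be attained. So the entire content of the theorem is that $M$, $K_1$, $K_2$, $K_3$ are compact. Since $M$ itself is compact (Subsection \ref{b4rt6}), the case $K=M$ is immediate and gives \eqref{cb9u4} for free. For the other three sets it suffices, since each is a subset of the compact set $M$, to prove that each is closed in $[0,1]^{\mathcal{A}}$.

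First I would handle $K_2$, the martingale set, as a direct application of Lemma \ref{j91s4f}. Indeed $K_2=\mathcal{K}(H)$ with $H$ the class of functions defined in Subsection \ref{nj887c4d6h}, because conditions (a) and (c) there say exactly that $P(|f|)<\infty$ and $P(f)=0$ for every $f\in H$ (note that membership in $K_2\subseteq M$ already guarantees $P(|\pi_i|)=\mu_i(|\pi_i|)<\infty$, which is why condition (a) is automatic and does not need to be imposed separately — but in any case it is subsumed). To invoke Lemma \ref{j91s4f} I must check the uniform integrability condition \eqref{b7t9} for each $f\in H$. For $f=\pi_1$ this is clear since $P\{|\pi_1|\,1(|\pi_1|>k)\}=\mu_1\{|x|\,1(|x|>k)\}$ is the same for every $P\in M$ and tends to $0$ as $k\to\infty$ by integrability of $|x|$ under $\mu_1$. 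For $f=(\pi_{i+1}-\pi_i)\,g(\pi_1,\dots,\pi_i)$ with $g$ bounded, say $\|g\|_\infty\le b$, one bounds $|f|\,1(|f|>k)\le b\,(|\pi_{i+1}|+|\pi_i|)\,1\bigl(b(|\pi_{i+1}|+|\pi_i|)>k\bigr)$, and then $P$ of the right-hand side is controlled by $\mu_{i+1}$- and $\mu_i$-integrals of $|x|\,1(b|x|>k/2)$ plus cross terms handled by the same splitting; all of these go to $0$ uniformly in $P\in M$ because they only involve the fixed marginals. Thus \eqref{b7t9} holds, Lemma \ref{j91s4f} applies, and $K_2$ is compact.

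Next, the set $K_1=\{P\in M:P\ll P^*\}$. Here I would argue directly that $K_1$ is closed. Let $(P_\alpha)$ be a net in $K_1$ with $P_\alpha\to P$ setwise; then $P\in M$ since $M$ is closed, and for any $A\in\mathcal{A}$ with $P^*(A)=0$ we have $P_\alpha(A)=0$ for all $\alpha$ (by $P_\alpha\ll P^*$), hence $P(A)=\lim_\alpha P_\alpha(A)=0$. So $P\ll P^*$ and $P\in K_1$; this uses only that setwise convergence preserves a single-set identity, which is exactly what the product topology gives us, so no uniform integrability is needed. Finally $K_3=K_1\cap K_2$ is the intersection of two closed (indeed compact) subsets of $M$, hence closed, hence compact. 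In each of the three cases, if the set in question is non-empty we apply \eqref{cb9u4} to conclude that $P(c)=\inf_{Q\in K_j}Q(c)$ for some $P\in K_j$, which is the assertion of the theorem.

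The one genuinely non-routine point, and the only place where a real estimate is required, is the verification of the uniform integrability condition \eqref{b7t9} for the product-type functions $(\pi_{i+1}-\pi_i)\,g(\pi_1,\dots,\pi_i)\in H$; everything else is a formal consequence of compactness of $M$, closedness under setwise limits, and Lemma \ref{j91s4f}. It is worth emphasizing that the key structural reason the argument works — and the reason the standard $\sigma$-additive theory needs extra hypotheses while this finitely additive version does not — is that $M\subseteq[0,1]^{\mathcal{A}}$ is automatically compact in the product topology, so closedness of a subset is all that separates it from compactness, and closedness for martingale and absolute-continuity constraints is cheap.
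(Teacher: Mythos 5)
Your proposal is correct and follows essentially the same route as the paper: compactness of $M$, identification of $K_2$ with $\mathcal{K}(H)$ and verification of the uniform integrability condition \eqref{b7t9} via the same splitting estimate, closedness of $K_1$ under setwise limits (which is just Lemma \ref{j91s4f} applied to $F=\{1_A:P^*(A)=0\}$), and $K_3=K_1\cap K_2$. The only point worth making explicit is that the integrability $\int\abs{x}\,\mu_i(dx)<\infty$ you use for the \eqref{b7t9} estimates is not automatic for $P\in M$ but is extracted from the hypothesis $K_2\ne\emptyset$, exactly as the paper does.
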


\begin{proof}
As noted in Subsection \ref{b4rt6}, $M$ is compact (and obviously non-empty). Moreover, $K_1=\mathcal{K}(F)$ where
$$F=\bigl\{1_A:A\in\mathcal{A},\,P^*(A)=0\bigr\}.$$
Since $F\subset B(\Omega,\mathcal{A})$, condition \eqref{b7t9} holds. Hence, $K_1$ is compact.

\medskip

\noindent Next, we let $(\Omega_i,\mathcal{A}_i)=(\mathbb{R},\mathcal{B}(\mathbb{R}))$ for all $i\in I$ and we prove that $K_2$ is compact (provided it is non-empty). To this end, we first note that $K_2=\mathcal{K}(H)$ where
\begin{gather*}
H=\{\pi_1\}\cup\bigl\{\bigl(\pi_{i+1}-\pi_i\bigr)\,g(\pi_1,\ldots,\pi_i):1\le i<n,\,\,g\in B(\mathbb{R}^i,\mathcal{B}(\mathbb{R}^i))\bigr\};
\end{gather*}
see Subsection \ref{nj887c4d6h}. Hence, it suffices to check condition \eqref{b7t9} for each $f\in H$.

\medskip

\noindent Since $K_2\neq\emptyset$, there is $Q\in K_2$ and this implies
$$\int\abs{x}\,\mu_i(dx)=Q(\abs{\pi_i})<\infty\quad\quad\text{for each }i\in I.$$
Let $f\in H$. If $f=\pi_1$, then
$$P\bigl\{\abs{\pi_1}\,1(\abs{\pi_1}>n)\bigr\}=\int\abs{x}\,1(\abs{x}>n)\,\mu_1(dx)\quad\quad\text{for each }P\in M.$$
Hence, \eqref{b7t9} holds since $\mu_1$ is $\sigma$-additive and $\int\abs{x}\,\mu_1(dx)<\infty$. Suppose now that
$$f=\bigl(\pi_{i+1}-\pi_i\bigr)\,g(\pi_1,\ldots,\pi_i)$$
for some $1\le i<n$ and some $g\in B(\mathbb{R}^i,\mathcal{B}(\mathbb{R}^i))$. To prove \eqref{b7t9}, it can be assumed $\sup\abs{g}\le 1$. Fix $\epsilon>0$, and take a constant $a>0$ such that
$$\int\abs{x}\,1(\abs{x}> a)\,\mu_i(dx)<\epsilon\quad\quad\text{for each }i\in I.$$
Then, for each $P\in M$, one obtains
\begin{gather*}
P\bigl\{\abs{f}\,1(\abs{f}>2a)\bigr\}\le P\bigl\{\abs{\pi_{i+1}-\pi_i}\,1(\abs{\pi_{i+1}-\pi_i}>2a)\bigr\}
\\\le P\Bigl\{\Bigl(\abs{\pi_{i+1}}+\abs{\pi_i}\Bigr)\,\Bigl(1(\abs{\pi_{i+1}}>a)+1(\abs{\pi_i}>a)\Bigr)\Bigr\}
\\=P\bigl\{\abs{\pi_i}\,1(\abs{\pi_i}>a)\bigr\}+P\bigl\{\abs{\pi_{i+1}}\,1(\abs{\pi_{i+1}}>a)\bigr\}\,+
\\+\,P\bigl\{\abs{\pi_{i+1}}\,1(\abs{\pi_i}>a)\bigr\}+P\bigl\{\abs{\pi_i}\,1(\abs{\pi_{i+1}}>a)\bigr\}.
\end{gather*}
In addition,
\begin{gather*}
P\bigl\{\abs{\pi_i}\,1(\abs{\pi_{i+1}}>a)\bigr\}\le a\,P(\abs{\pi_{i+1}}>a)+P\bigl\{\abs{\pi_i}\,1(\abs{\pi_i}> a)\bigr\}
\\\le P\bigl\{\abs{\pi_{i+1}}\,1(\abs{\pi_{i+1}}> a)\bigr\}+P\bigl\{\abs{\pi_i}\,1(\abs{\pi_i}> a)\bigr\}
\end{gather*}
and similarly
$$P\bigl\{\abs{\pi_{i+1}}\,1(\abs{\pi_i}>a)\bigr\}\le P\bigl\{\abs{\pi_i}\,1(\abs{\pi_i}> a)\bigr\}+P\bigl\{\abs{\pi_{i+1}}\,1(\abs{\pi_{i+1}}> a)\bigr\}.$$
Therefore,
\begin{gather*}
P\bigl\{\abs{f}\,1(\abs{f}>2a)\bigr\}\le 6\,\max_{i\in I}\,P\bigl\{\abs{\pi_i}\,1(\abs{\pi_i}>a)\bigr\}.
\end{gather*}
Finally, since $P\bigl\{\abs{\pi_i}\,1(\abs{\pi_i}>a)\bigr\}=\int\abs{x}\,1(\abs{x}> a)\,\mu_i(dx)$ for each $P\in M$, one obtains
\begin{gather*}
\sup_{P\in M}P\bigl\{\abs{f}\,1(\abs{f}>k)\bigr\}\le 6\,\max_{i\in I}\,\int\abs{x}\,1(\abs{x}> a)\,\mu_i(dx)<6\,\epsilon
\end{gather*}
whenever $k\ge 2a$. Hence, condition \eqref{b7t9} holds and this proves that $K_2$ is compact. Finally, $K_3$ is compact since $K_3=K_1\cap K_2$.
\end{proof}

\medskip

\noindent Whether or not the sets $K_1$ and $K_3$ are non-empty is investigated in Section \ref{h81a3a}.

\medskip

\noindent Let us turn to problem (iii). Similarly to (i), everything goes smoothly if $\Gamma$ is replaced by $M$.

\begin{thm}\label{h9k33d}
Duality always holds with $M$ in the place of $\Gamma$, namely
\begin{gather*}
\inf_{Q\in M}Q(c)=\sup_{f_1,\ldots,f_n}\,\sum_{i=1}^n\mu_i(f_i)
\end{gather*}
where $\sup$ is over the $n$-tuple $(f_1,\ldots,f_n)$ such that $f_i\in L_1(\mu_i)$ for each $i\in I$ and $\oplus_{i=1}^nf_i\le c$.
\end{thm}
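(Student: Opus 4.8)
The plan is to prove the two inequalities separately. The inequality $\sum_i\mu_i(f_i)\le\inf_{Q\in M}Q(c)$ whenever $\oplus_if_i\le c$ is the easy (``weak duality'') direction: for any $Q\in M$ and any admissible $(f_1,\ldots,f_n)$, monotonicity of the finitely additive integral gives $\sum_i\mu_i(f_i)=\sum_i Q(f_i\circ\pi_i)=Q(\oplus_if_i)\le Q(c)$, where the middle equality uses that $Q$ has marginal $\mu_i$ and that $f_i\in L_1(\mu_i)$ so each $f_i\circ\pi_i$ is $Q$-integrable. Taking the infimum over $Q$ and the supremum over admissible tuples gives $\sup\le\inf$. (A small point: if $\inf_{Q\in M}Q(c)=\infty$ there is nothing to prove for this direction; if it is finite this argument is unconditional.)

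For the reverse inequality I would use the Hahn--Banach / minimax structure that is standard in finitely additive duality. First reduce to bounded $c$: since $Q(c)=\sup_k Q(c\wedge k)$ by \eqref{n6t4} and the supremum over admissible tuples for $c\wedge k$ is dominated by that for $c$, it suffices to prove $\inf_{Q\in M}Q(c_k)\le\sup\{\sum_i\mu_i(f_i):\oplus_if_i\le c_k\}$ for $c_k=c\wedge k$ and then let $k\to\infty$ (using that $\inf_{Q\in M}Q(c\wedge k)\uparrow\inf_{Q\in M}Q(c)$, which needs a short argument via compactness of $M$ and \eqref{cb9u4}, or can be sidestepped by noting any tuple admissible for $c_k$ is admissible for $c$). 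With $c$ now bounded and $\mathcal{A}$-measurable, work in the Banach space $B(\Omega,\mathcal{A})$ with the sup norm, whose dual contains all finitely additive measures. Consider the convex cone (or subspace) $V=\{\oplus_i f_i : f_i\in B(\Omega_i,\mathcal{A}_i)\}$ and the linear functional $\ell$ on $V$ defined by $\ell(\oplus_if_i)=\sum_i\mu_i(f_i)$; this is well defined because if $\oplus_if_i=0$ pointwise then, integrating against any fixed $Q_0\in M$ (e.g. a product of the $\mu_i$), $\sum_i\mu_i(f_i)=Q_0(\oplus_if_i)=0$. The sublinear functional to dominate is $p(h)=\inf\{\sum_i\mu_i(f_i):\oplus_if_i\ge h,\ f_i\in B(\Omega_i,\mathcal{A}_i)\}$ or, more directly, one shows $\ell\le p$ on $V$ where $p(h)=Q_0(h)$ fails to be tight enough — instead I would invoke the sandwich/minimax theorem: $\inf_{Q\in M}Q(c)$ equals $\min_{Q\in M}Q(c)$ by \eqref{cb9u4} (as $M$ is compact and $P\mapsto P(c)$ is continuous for bounded $c$), and then use that $M$ is exactly the set of f.a.p.'s $Q$ with $Q(\oplus_if_i)=\sum_i\mu_i(f_i)$ for all $f_i$; a Sion-type minimax argument applied to $(Q,(f_i))\mapsto Q(c-\oplus_if_i)+\sum_i\mu_i(f_i)$ over the compact convex set $M$ and the convex set of tuples then swaps $\inf_Q\sup_{(f_i)}$ with $\sup_{(f_i)}\inf_Q$, yielding the claim.

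The cleanest route, and the one I would actually write, is: extend $\ell$ from $V$ to a bounded linear functional $L$ on all of $B(\Omega,\mathcal{A})$ with $L(h)\le\|h\|$ controlled appropriately and $L(h)\ge 0$ for $h\ge 0$; then $L$ is represented by some $Q\in\mathbb{P}$, and the constraints $L(f_i\circ\pi_i)=\mu_i(f_i)$ force $Q\in M$. To get the right value one applies Hahn--Banach to dominate $\ell$ on $V$ by the sublinear functional $p(h)=\inf\{\alpha+\sum_i\mu_i(f_i): h\le \alpha\,1 + \oplus_if_i\} $ — no, more simply $p(h)=\sup\{\text{...}\}$; here I would follow the classical Kantorovich duality proof verbatim, replacing ``probability measure'' by ``f.a.p.'' throughout, which works because $B(\Omega,\mathcal{A})$ is still a Banach lattice and the Riesz-type representation of positive functionals by f.a.p.'s is available (see \cite{BRBR}). \textbf{The main obstacle} is precisely ensuring the Hahn--Banach extension delivers a functional that (a) is positive, hence represented by a genuine f.a.p., and (b) still attains the value $\sup_{(f_i)}\sum_i\mu_i(f_i)$ rather than something larger; this is handled by choosing the dominating sublinear functional to be $p(h)=\inf\{\sum_i\mu_i(f_i):\oplus_i f_i\ge h\}$ (finite since $c$, hence $h$, is bounded and one may take $f_1\equiv\|h\|$, $f_i\equiv 0$ for $i>1$) and checking $\ell=p$ on $V$ and $p$ is sublinear and monotone — routine but the crux of the argument. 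Letting $k\to\infty$ at the end recovers the statement for unbounded $c$.
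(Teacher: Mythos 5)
Your overall architecture coincides with the paper's: weak duality by integrating an admissible tuple against any $Q\in M$; then, for the hard direction, truncation to $c\wedge k$, the subspace $D=\{\oplus_{i}f_i:f_i\in B(\Omega_i,\mathcal{A}_i)\}$ carrying the functional $\ell(\oplus_i f_i)=\sum_i\mu_i(f_i)$, a positive Hahn--Banach extension to $B(\Omega,\mathcal{A})$ represented by a f.a.p.\ that is automatically in $M$, and a passage to the limit in $k$ via compactness of $M$ (your subnet argument for $\inf_{Q\in M}Q(c\wedge k)\uparrow\inf_{Q\in M}Q(c)$ is exactly what the paper does with its sequence $P_k$). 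The steps you call routine are indeed fine, including the identification of the bounded and the $L_1$ dual values at the end.

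The gap is at the step you yourself flag as the crux, and the fix you commit to does not work. Dominating $\ell$ by $p(h)=\inf\{\sum_i\mu_i(g_i):\oplus_i g_i\ge h\}$ does yield a positive extension $L$, hence some $Q\in M$, with $Q(c\wedge k)=L(c\wedge k)\le p(c\wedge k)$; but $p(c\wedge k)$ is the value of the \emph{majorant} problem, not the minorant supremum $s_k=\sup\{\sum_i\mu_i(f_i):\oplus_i f_i\le c\wedge k,\ f_i\in B(\Omega_i,\mathcal{A}_i)\}$ by which you need to bound $Q(c\wedge k)$. The two values differ in general: in Example \ref{f90h4qj7}, with $c=1(\pi_1\le\pi_2)$, the minorant supremum is $0$, while every bounded majorant satisfies $m(g_1)+m(g_2)=\widetilde{P}(\oplus_i g_i)\ge\widetilde{P}(c)=1$; so your construction only gives some $Q\in M$ with $Q(c)\le 1$, which says nothing about $\inf_{Q\in M}Q(c)\le 0$. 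What is needed is a positive extension that \emph{attains} the minimal admissible value $s_k$ at the single point $c\wedge k$; the paper gets exactly this from \cite[Lemma 2]{BR2021} (namely $T_k(c\wedge k)=\sup_{f\in D_k}T(f)$), and you can reproduce it either by a two-step extension (first to $D+\mathbb{R}\,(c\wedge k)$, assigning the value $s_k$ to $c\wedge k$ and checking consistency, then positively to all of $B(\Omega,\mathcal{A})$) or by dominating with $q(h)=\inf\{\ell(g)+t\,s_k:\ g\in D,\ t\ge 0,\ h\le g+t\,(c\wedge k)\}$ in place of your $p$. Your minimax alternative is also broken as written: for $Q\in M$ one has $Q(c-\oplus_i f_i)+\sum_i\mu_i(f_i)=Q(c)$ identically in $(f_i)$, so exchanging $\inf$ and $\sup$ over $M$ is vacuous; a minimax route would require taking the $\inf$ over all of $\mathbb{P}$ rather than over $M$.
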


\begin{proof}
If $Q\in M$, $f_i\in L_1(\mu_i)$ for all $i\in I$ and $\oplus_{i=1}^nf_i\le c$, then
$$Q(c)\ge Q\Bigl(\,\oplus_{i=1}^nf_i\Bigr)=\sum_{i=1}^n Q\bigl(f_i\circ\pi_i\bigr)=\sum_{i=1}^n\mu_i(f_i).$$
Hence, it suffices to show that
$$\sup_{f_1,\ldots,f_n}\,\sum_{i=1}^n\mu_i(f_i)\ge\inf_{Q\in M}Q(c).$$

\medskip

\noindent Let
$$D=\bigl\{\,\oplus_{i=1}^nf_i:f_i\in B(\Omega_i,\mathcal{A}_i),\,i\in I\bigr\}.$$
Then, $D$ is a linear subspace of $B(\Omega,\mathcal{A})$ including the constants. If
$$\oplus_{i=1}^nf_i=\oplus_{i=1}^ng_i\quad\quad\text{with }f_i,\,g_i\in B(\Omega_i,\mathcal{A}_i)\text{ for all }i,$$
then $f_i=g_i+a_i$ for all $i\in I$ where $a_1,\ldots,a_n$ are constants satisfying $\sum_{i=1}^na_i=0$. Thus,
$$\sum_{i=1}^n\mu_i(f_i)=\sum_{i=1}^n\mu_i(g_i).$$
As a consequence, for each $\oplus_{i=1}^nf_i\in D$, one can define
$$T\Bigl(\oplus_{i=1}^nf_i\Bigr)=\sum_{i=1}^n\mu_i(f_i).$$
Then, $T:D\rightarrow\mathbb{R}$ is a linear positive functional such that $T(1)=1$.

\medskip

\noindent For each integer $k\ge 1$, define
$$D_k=\bigl\{f\in D:f\le c\wedge k\bigr\}.$$
By Hahn-Banach theorem, $T$ can be extended to a linear positive functional $T_k$ on $B(\Omega,\mathcal{A})$ such that
$$T_k(c\wedge k)=\sup_{f\in D_k}T(f);$$
see e.g. \cite[Lemma 2]{BR2021}. Moreover, by standard arguments on the theory of coherence (see \cite[Sect. 2]{BR2021} again), there is a f.a.p. $P_k\in\mathbb{P}$ such that
$$T_k(f)=\int f\,dP_k\quad\quad\text{for all }f\in B(\Omega,\mathcal{A}).$$
For $i\in I$ and $B\in\mathcal{A}_i$, one obtains
$$P_k(\pi_i\in B)=T_k\bigl(1(\pi_i\in B)\bigr)=T\bigl(1(\pi_i\in B)\bigr)=\mu_i(B).$$
Hence, $P_k\in M$. Moreover, letting
$$D_\infty=\bigl\{f\in D:f\le c\bigr\},$$
one obtains
$$\sup_{f\in D_\infty}T(f)\ge\sup_{f\in D_k}T(f)=\int c\wedge k\,dP_k=P_k(c\wedge k).$$

\medskip

\noindent Next, consider the sequence $(P_k:k\ge 1)$. Since $M$ is compact,
$$P=\lim_\alpha P_{k_\alpha}$$
for some $P\in M$ and some subnet $(P_{k_\alpha})$ of $(P_k)$. Given an integer $m\ge 1$, there is $\alpha^*$ such that $k_\alpha\ge m$ whenever $\alpha\ge\alpha^*$. Therefore,
$$\sup_{f\in D_\infty}T(f)\ge P_{k_\alpha}(c\wedge k_\alpha)\ge P_{k_\alpha}(c\wedge m)\quad\quad\text{for each }\alpha\ge\alpha^*.$$
It follows that
$$\sup_{f\in D_\infty}T(f)\ge\lim_\alpha P_{k_\alpha}(c\wedge m)=P(c\wedge m).$$
Since $P\in M$, this in turn implies
$$\sup_{f\in D_\infty}T(f)\ge\sup_mP(c\wedge m)=P(c)\ge\inf_{Q\in M}Q(c).$$

\medskip

\noindent Finally, fix $\epsilon>0$ and $f_i\in L_1(\mu_i)$ such that $\oplus_{i=1}^nf_i\le c$. It is easily seen that there are $g_1,\ldots,g_n$ satisfying
$$g_i\in B(\Omega_i,\mathcal{A}_i)\text{ for each }i\in I,\quad\oplus_{i=1}^ng_i\le c,\quad\sum_{i=1}^n\mu_i(g_i)+\epsilon > \sum_{i=1}^n\mu_i(f_i).$$
Therefore,
$$\sup_{f\in D_\infty}T(f)=\sup_{f_1,\ldots,f_n}\,\sum_{i=1}^n\mu_i(f_i)$$
where the $\sup$ on the right is over the $n$-tuple $(f_1,\ldots,f_n)$ such that $f_i\in L_1(\mu_i)$ for each $i\in I$ and $\oplus_{i=1}^nf_i\le c$. This concludes the proof.
\end{proof}

\medskip

\noindent Finally, we focus on problem (ii). Let
$$L=\bigl\{\oplus_{i=1}^nf_i:f_i\in L_1(\mu_i),\,i\in I\bigr\}.$$

\begin{thm}\label{uf6fa1}
Suppose $c\le f^*$ for some $f^*\in L$. Then, for each $P\in M$,
$$P(c)=\inf_{Q\in M}Q(c)$$
if and only if
\begin{gather}\label{qq28n4}
\text{There is }f\in L\text{ such that }f\le c\text{ and }P(c>f+\epsilon)=0\text{ for all }\epsilon>0.
\end{gather}
\end{thm}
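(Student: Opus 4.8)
The plan is to prove the two implications separately, treating \eqref{qq28n4} as the ``easy'' sufficient direction and the converse (optimality implies the existence of an approximating $f\in L$) as the harder one, which should rest on Theorem \ref{h9k33d} (duality) together with a limiting argument.

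\medskip

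\textbf{Sufficiency.} Suppose \eqref{qq28n4} holds, so there is $f\in L$ with $f\le c$ and $P(c>f+\epsilon)=0$ for all $\epsilon>0$. First I would observe that $f\le c\le f^*$ with $f,f^*\in L$ forces $f\in L_1(P)$ for every $Q\in M$ actually for $P$ it suffices that $P(|f|)<\infty$, which holds since $f$ is a sum $\oplus_if_i$ with $f_i\in L_1(\mu_i)$ and hence $P(|f_i\circ\pi_i|)=\mu_i(|f_i|)<\infty$. Then, for any $Q\in M$, $Q(c)\ge Q(\oplus_if_i)=\sum_i\mu_i(f_i)$, so $\inf_{Q\in M}Q(c)\ge\sum_i\mu_i(f_i)=P(f)$. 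On the other hand, writing $c\le f+ (c-f)$ and using $P(c>f+\epsilon)=0$ for every $\epsilon>0$, one gets $P(c)=\sup_k P(c\wedge k)$; splitting $c\wedge k\le (f\wedge k)+\epsilon+ (c-f)\,1(c>f+\epsilon)$ and noting the last term is $P$-null, one concludes $P(c)\le P(f)+\epsilon$ for all $\epsilon>0$, hence $P(c)\le P(f)$. Combining, $P(c)=P(f)=\inf_{Q\in M}Q(c)$. (The only mild care needed is the manipulation of $c\wedge k$ versus $c$ when $c$ may be infinite; but on $\{c\le f+\epsilon\}$ we have $c\le f^*<\infty$ off a $P$-null set only if $f^*$ is finite $P$-a.s.---this is automatic since $f^*\in L$ is real-valued.)

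\medskip

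\textbf{Necessity.} Now suppose $P(c)=\inf_{Q\in M}Q(c)=:\gamma$. By Theorem \ref{h9k33d}, $\gamma=\sup_{g\in D_\infty}\sum_i\mu_i(g_i)$ over $\oplus_ig_i\le c$ with $g_i\in L_1(\mu_i)$; choose $f^{(m)}=\oplus_i f_i^{(m)}\le c$ with $\sum_i\mu_i(f_i^{(m)})\ge\gamma-2^{-m}$, i.e. $P(f^{(m)})\ge\gamma-2^{-m}$. Since $c\le f^*\in L$, we may also assume $f^{(m)}\le f^*$ (replace $f_i^{(m)}$ by $f_i^{(m)}\wedge f_i^*$; this only increases the integral because $\oplus_i(f_i^{(m)}\wedge f_i^*)\le\oplus_if_i^{(m)}\le c$ is not automatic --- instead take a componentwise $\min$ carefully, or simply note $f^{(m)}\le c\le f^*$ already, so no truncation is needed). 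Set $f=\sup_m f^{(m)}$; a priori this need not lie in $L$, and making $f$ (or a suitable modification of it) a genuine element of $L$ is the crux. The idea: since each $f^{(m)}_i\in L_1(\mu_i)$ and $f^{(m)}\le f^*$, the partial sums are controlled, and one can pass to a convex-combination / lattice argument in $L_1(\mu_1)\times\cdots\times L_1(\mu_n)$ to produce $f\in L$ with $f\le c$ and $P(f)=\gamma$. Then $P(c)=\gamma=P(f)$ with $f\le c$ gives $P(c-f)=0$, whence $P(c>f+\epsilon)\le\epsilon^{-1}P(c-f)=0$ for every $\epsilon>0$, which is exactly \eqref{qq28n4}.

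\medskip

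\textbf{Main obstacle.} The hard step is the last one: upgrading the supremizing sequence $f^{(m)}\in L$ (with $\sum_i\mu_i(f^{(m)}_i)\to\gamma$) to a single $f\in L$ with $f\le c$ and $P(f)=\gamma$. One cannot just take a pointwise supremum, since that may leave $L$. I expect the right tool is a uniform-integrability / Komlós-type or Dunford--Pettis argument applied coordinatewise: the sequences $(f^{(m)}_i)_m$ are bounded in $L_1(\mu_i)$ and bounded above by $f^*_i$, so after passing to Cesàro means (which stay in $L$ and keep $\oplus\le c$ by convexity of the constraint $\oplus_ig_i\le c$... wait, that constraint is \emph{not} preserved by averaging two admissible tuples unless one is careful --- actually it \emph{is}: if $\oplus_i g_i\le c$ and $\oplus_i h_i\le c$ then $\oplus_i\frac{g_i+h_i}{2}\le c$), one gets $\mu_i$-a.s. convergent subsequences with limits $f_i\in L_1(\mu_i)$; then $f:=\oplus_if_i\in L$, $f\le c$ by Fatou, and $P(f)\ge\limsup P(f^{(m)})=\gamma$ provided one can pass $P$ through the limit, which needs the uniform integrability \eqref{b7t9}-type control that $f^{(m)}\le f^*\in L$ supplies (so that $P(f^{(m)})\to P(f)$ along the relevant subnet). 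Since $f\le c\le f^*$ and $P\in M$ we also have $P(f)\le P(c)=\gamma$, forcing equality. I would flag this convex-averaging-plus-Fatou passage, and the verification that $P$ commutes with the limit, as the only place requiring genuine work; everything else is bookkeeping with $c\wedge k$ and the defining properties of $M$.
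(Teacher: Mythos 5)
Your overall architecture matches the paper's: the sufficiency direction is handled exactly as in the paper (constancy of $Q\mapsto Q(g)$ on $M$ for $g\in L$, plus the $\epsilon$-splitting to get $P(c)=P(f)$), and that part of your argument is complete and correct. The problem is the necessity direction, where you correctly identify the crux --- upgrading a supremizing sequence $f^{(m)}\in L$ with $\oplus_i f_i^{(m)}\le c$ and $\sum_i\mu_i(f_i^{(m)})\to\gamma$ to a single maximizer $f\in L$ with $f\le c$ --- but you do not close it, and your sketched Koml\'os/Ces\`aro route has real obstructions. First, boundedness of each coordinate sequence $(f_i^{(m)})_m$ in $L_1(\mu_i)$ is not automatic: only the sum $\oplus_i f_i^{(m)}$ is sandwiched between controls (one can shift $f_1^{(m)}$ up by $a_m\to\infty$ and $f_2^{(m)}$ down by $a_m$ without changing anything), so before any compactness argument one must re-center and prove coordinatewise $L_1$-boundedness, which is itself nontrivial. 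Second, even after extracting a.e.-convergent Ces\`aro means, the limit satisfies $\oplus_i f_i\le c$ only off an exceptional set built from coordinatewise null sets, whereas the statement (and the definition of $L$) requires the inequality to hold \emph{everywhere}; repairing $f_i$ on null sets to recover the pointwise inequality requires an additional argument.

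The paper sidesteps all of this by invoking Theorem (2.21) of Kellerer \cite{KEL}: under the hypothesis $c\le f^*$ with $f^*\in L$, there exists $f\in L$ with $f\le c$ and $Q(f)=\sup_{g\in L,\,g\le c}Q(g)$ for every $Q\in M$ (the sup is the same for all $Q\in M$ since $Q(g)$ depends only on the marginals). Combined with Theorem \ref{h9k33d}, this gives $P(c)=\inf_{Q\in M}Q(c)=\sup_{g\in L,\,g\le c}P(g)=P(f)$, hence $P(c-f)=0$ and \eqref{qq28n4} follows by the Markov inequality, exactly as you conclude. So the missing ingredient in your proposal is precisely this attainability result; either cite it or supply the (substantial) normalization-plus-compactness argument that proves it.
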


\medskip

\noindent It is worth noting that, since $P\in M$ is not necessarily $\sigma$-additive, $P(c>f+\epsilon)=0$ for all $\epsilon>0$ does not imply $P(c>f)=0$.

\begin{proof}[Proof of Theorem \ref{uf6fa1}]
First note that, for all $Q\in M$ and $g=\oplus_{i=1}^ng_i\in L$,
$$Q(g)=\sum_{i=1}^nQ\bigl(g_i\circ\pi_i\bigr)=\sum_{i=1}^n\mu_i(g_i).$$
Hence, for fixed $g\in L$, the map $Q\mapsto Q(g)$ is constant on $M$. A second (and basic) remark is that, since $c\le f^*$ for some $f^*\in L$, there is $f:\Omega\rightarrow\mathbb{R}$ such that
\begin{gather}\label{v03e7jk8}
f\in L,\,\,f\le c\quad\text{and}\quad Q(f)=\sup_{g\in L,\,g\le c}Q(g)\quad\text{for each }Q\in M;
\end{gather}
see Theorem (2.21) of \cite{KEL}.

\medskip

\noindent Having noted these facts, fix $P\in M$. Suppose $P(c)=\inf_{Q\in M}Q(c)$ and take a function $f$ satisfying \eqref{v03e7jk8}. Then,
$$P(c)=\inf_{Q\in M}Q(c)=\sup_{g\in L,\,g\le c}P(g)=P(f)$$
where the second equality is due to Theorem \ref{h9k33d}. Hence, condition \eqref{qq28n4} follows from $f\le c$ and $P(c-f)=0$. Conversely, if $f$ is any function satisfying \eqref{qq28n4}, one obtains
$$P(c)=P(f)=Q(f)\le Q(c)\quad\quad\text{for each }Q\in M.$$
\end{proof}

\medskip

\noindent In the standard framework (i.e., with $\Gamma$ in the place of $M$), to settle problems (i)-(ii)-(iii), one needs some conditions. To fix ideas, suppose the $\Omega_i$ are separable metric spaces and $\mathcal{A}_i$ the corresponding Borel $\sigma$-fields. Then, for the $\inf$ in problem (i) to be attained, $c$ should be lower semicontinuous and each $\mu_i$ a tight probability measure. As regards (iii), duality holds if $0\le c<\infty$ and the $\Omega_i$ are Polish spaces. Moreover, in case the $(\Omega_i,\mathcal{A}_i)$ are arbitrary measurable spaces, duality holds if $c\le f^*$ for some $f^*\in L$ and all but one the $\mu_i$ are perfect probability measures. Similarly, as regards problem (ii). An analogous of Theorem \ref{uf6fa1} holds, with $\Gamma$ in the place of $M$, provided all but one the $\mu_i$ are perfect probability measures. See e.g. \cite{AGS}, \cite{BS2011}, \cite{EKSO}, \cite{KEL}, \cite{RRLIBRO}, \cite{RAMRUS1995}, \cite{RIGO}, \cite{RUSC1996}, \cite{VILLA}.

\medskip

\noindent Thus, in a sense, this Subsection can be summarized by stating that all the above conditions are superfluous if the couplings are allowed to be finitely additive.

\medskip

\subsection{Two examples} In this Subsection, we let $n=2$ and
$$(\Omega_i,\mathcal{A}_i,\mu_i)=([0,1],\mathcal{B}([0,1]),m)$$
for each $i$, where $m$ is the Lebesgue measure. In addition, $T$ denotes the element of $\Gamma$ supported by the diagonal, that is
$$T(A)=m\bigl\{x\in [0,1]:(x,x)\in A\bigr\}\quad\quad\text{for each }A\in\mathcal{A}=\mathcal{B}([0,1]^2).$$

\medskip

\begin{ex}\label{f90h4qj7}
We first note that there are two f.a.p.'s $P$ and $\widetilde{P}$ on $\mathcal{A}$ such that
$$P,\,\widetilde{P}\in M\quad\text{and}\quad P(\pi_1>\pi_2)=\widetilde{P}(\pi_1<\pi_2)=1.$$
To prove this fact, denote by $\mathcal{R}$ the field generated by the measurable rectangles $A_1\times A_2$ with $A_1,\,A_2\in\mathcal{B}([0,1])$. It is not hard to see that
\begin{gather}\label{w71mrgh6}
T(A)=1\quad\text{whenever}\quad A\in\mathcal{R}\text{ and }A\supset\{\pi_1>\pi_2\}.
\end{gather}
Because of \eqref{w71mrgh6}, the restriction $T|\mathcal{R}$ can be extended to a f.a.p. $P$ on $\mathcal{A}$ such that $P(\pi_1>\pi_2)=1$; see e.g. \cite[Theo. 3.3.3]{BRBR}. Moreover, since $T\in\Gamma$ and $P=T$ on $\mathcal{R}$, one obtains $P\in M$. The existence of $\widetilde{P}$ can be proved by the same argument.

\medskip

\noindent Next, define
$$c=1(\pi_1\le\pi_2).$$
In this case, duality holds for $\Gamma$. Therefore,
$$\inf_{Q\in\Gamma}Q(c)=\sup_{f_1,f_2}\,\sum_{i=1}^2m(f_i)\le P(c)=P(\pi_1\le\pi_2)=0,$$
where $\sup$ is over the pairs $(f_1,f_2)$ where $f_1$ and $f_2$ are $m$-integrable functions such that $f_1(x)+f_2(y)\le c(x,y)$ for all $(x,y)\in [0,1]^2$. However, for each $Q\in\Gamma$,
$$Q(\pi_1-\pi_2)=Q(\pi_1)-Q(\pi_2)=0\quad\Rightarrow\quad Q(c)=Q(\pi_1\le\pi_2)>0.$$
Hence, the $\inf$ in problem (i) is not attained. On the contrary, by Theorem \ref{v77h3s1z}, the $\inf$ is attained if $\Gamma$ is replaced by $M$. In fact,
$$\inf_{Q\in M}Q(c)=0=P(c).$$

\medskip

\noindent Next, for all $(x,y)$, define
$$c(x,y)=\infty\text{ if }x<y,\quad c(x,y)=1\text{ if }x=y,\quad c(x,y)=0\text{ if }x>y.$$
If $Q\in\Gamma$ and $Q(c)<\infty$, then $Q(\pi_1=\pi_2)=1$. Therefore, $\inf_{Q\in\Gamma}Q(c)=1$. However, as shown in \cite[Ex. 4.1]{BS2011}, duality fails for $\Gamma$. In fact, if $f_1$ and $f_2$ are $m$-integrable functions such that $f_1(x)+f_2(y)\le c(x,y)$ for all $(x,y)$, then
\begin{gather*}
m(f_1)+m(f_2)=\lim_{\epsilon\rightarrow 0}\,\Bigl\{\int_\epsilon^1f_1(x)\,dx+\int_0^{1-\epsilon}f_2(x)\,dx\Bigr\}
\\=\lim_{\epsilon\rightarrow 0}\int_0^{1-\epsilon}\bigl\{f_1(x+\epsilon)+f_2(x)\bigr\}\,dx\le\lim_{\epsilon\rightarrow 0}\int_0^{1-\epsilon}c(x+\epsilon,x)\,dx=0.
\end{gather*}
Once again, because of Theorem \ref{h9k33d}, duality holds if $\Gamma$ is replaced by $M$. In fact, since $\{c=0\}=\{\pi_1>\pi_2\}$, one obtains
$$\sup_{f_1,f_2}\,\sum_{i=1}^2m(f_i)=0=P(c)=\inf_{Q\in M}Q(c).$$
\end{ex}

\medskip

\begin{ex}\label{ab6ym1l}
Let $P$ and $\widetilde{P}$ be the f.a.p.'s introduced in Example \ref{f90h4qj7} and
$$K=\bigl\{Q\in M:(\pi_1,\pi_2)\text{ is a }Q\text{-martingale}\bigr\}.$$
Then, $P,\,\widetilde{P}\in K$. In fact, if $f(x,y)=g(x)\,(y-x)$ for all $(x,y)$ and some bounded Borel function $g:[0,1]\rightarrow\mathbb{R}$, then
\begin{gather*}
\abs{P(f)}\le P(\abs{f})\le \sup\abs{g}\,P(\abs{\pi_2-\pi_1})
\\=\sup\abs{g}\,P(\pi_1-\pi_2)=\sup\abs{g}\,\bigl\{P(\pi_1)-P(\pi_2)\bigr\}=0.
\end{gather*}
Hence $P(f)=0$, and similarly $\widetilde{P}(f)=0$.

\medskip

\noindent The fact that $P,\,\widetilde{P}\in K$ suggests two (related) remarks.

\medskip

\begin{itemize}

\item $K$ contains various elements in addition to $T$. For instance,
$$a\,P+b\,\widetilde{P}+(1-a-b)\,T\in K$$
whenever $a,\,b\ge 0$ and $a+b\le 1$. On the contrary, the only member of $K\cap\Gamma$ is $T$.

\medskip

\item Let $c=1(\pi_1=\pi_2)$. Arguing as in \cite[Ex. 8.1]{BNT2017}, it can be shown that
$$m(f_1)+m(f_2)\le 0$$
whenever $f_1$ and $f_2$ are $m$-integrable and satisfy
\begin{gather}\label{ga3p}
f_1(x)+f_2(y)\le c(x,y)-g(x)\,(y-x)
\end{gather}
for all $(x,y)$ and some bounded Borel function $g$. Since $K\cap\Gamma=\{T\}$, one obtains
$$\inf_{Q\in K\cap\Gamma}Q(c)=T(c)=1>0=\sup_{f_1,f_2}\bigl\{m(f_1)+m(f_2)\bigr\}$$
where $\sup$ is over the pairs $(f_1,f_2)$ of $m$-integrable functions satisfying condition \eqref{ga3p}. Hence, in the standard framework, there is a duality gap in the martingale problem. To avoid this gap, a suitable almost sure formulation of the dual problem is to be adopted; see \cite{BNT2017} again. On the contrary, the gap does not arise if $\Gamma$ is replaced by $M$ (so that $K\cap\Gamma$ is replaced by $K$). In fact,
$$0\le\inf_{Q\in K}Q(c)\le P(c)=P(\pi_1=\pi_2)=0.$$

\end{itemize}

\medskip

\noindent A conjecture is that, with $M$ in the place of $\Gamma$, the absence of duality gap in the martingale problem is a general fact, and not a lucky feature of this example. Another conjecture is that some of the stability results, which fail in the standard framework when $\Omega_i=\mathbb{R}^d$, are valid in a finitely additive setting; see \cite{BAVE}, \cite{BJMP}, \cite{BRUJUIL}.

\end{ex}

\medskip

\section{Existence of f.a.p.'s and probability measures\\satisfying certain conditions}\label{h81a3a}

\noindent For Theorem \ref{v77h3s1z} to apply, certain collections $K_j$ of f.a.p.'s should be non-empty. We now give conditions for $K_j\ne\emptyset$. By the same argument, we also obtain conditions for the existence of certain probability measures. This section is based on ideas from \cite[Sect. 3]{BPRSPIZ2015} which in turn was inspired by Strassen \cite{STR}.

\medskip

\noindent Let $\mathcal{U}\subset\mathbb{P}$ be any collection of f.a.p.'s on $\mathcal{A}$. If $P\in M\cap\mathcal{U}$ and $f_i\in B(\Omega_i,\mathcal{A}_i)$ for each $i\in I$, one trivially obtains
\begin{gather*}
\sum_{i=1}^n\mu_i(f_i)=P\Bigl(\oplus_{i=1}^nf_i\Bigr)\ge\inf_{Q\in\mathcal{U}}Q\Bigl(\oplus_{i=1}^nf_i\Bigr)
\end{gather*}
where the equality is due to $P\in M$ and the inequality to $P\in\mathcal{U}$. Hence,
\begin{gather}\label{j92x1ly6}
\sum_{i=1}^n\mu_i(f_i)\ge\inf_{Q\in\mathcal{U}}Q\Bigl(\oplus_{i=1}^nf_i\Bigr)\quad\quad\text{whenever }f_i\in B(\Omega_i,\mathcal{A}_i)\text{ for all }i\in I.
\end{gather}
Our next goal is proving that, for some choices of $\mathcal{U}$, condition \eqref{j92x1ly6} is actually equivalent to $M\cap\mathcal{U}\ne\emptyset$. In what follows, we adopt the usual convention $$\inf\emptyset=\infty.$$
Therefore, condition \eqref{j92x1ly6} is trivially false if $\mathcal{U}=\emptyset$.

\medskip

\begin{thm}\label{f6h99k33ws}
Let $P^*\in\mathbb{P}$ and $F$ a collection of $\mathcal{A}$-measurable functions $f:\Omega\rightarrow\mathbb{R}$. If $\mathcal{U}=\bigl\{Q\in\mathbb{P}:Q\ll P^*\bigr\}$, condition \eqref{j92x1ly6} is equivalent to $M\cap\mathcal{U}\ne\emptyset$. Moreover, if
$$\mathcal{U}=\bigl\{Q\in\mathbb{P}:Q\ll P^*,\,Q(\abs{f})<\infty\text{ and }Q(f)=0\text{ for each }f\in F\bigr\},$$
then \eqref{j92x1ly6} is equivalent to $M\cap\mathcal{U}\ne\emptyset$ provided
\begin{gather}\label{ad3339jgt}
\lim_k\sup_{Q\in\mathcal{U}}Q\bigl\{\abs{f}\,1(\abs{f}>k)\bigr\}=0\quad\text{for all }f\in F.
\end{gather}
\end{thm}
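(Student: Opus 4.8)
\emph{Plan.} The implication ``$M\cap\mathcal{U}\ne\emptyset\Rightarrow$ \eqref{j92x1ly6}'' is already recorded in the text just before the statement, and plugging $f_i\equiv 0$ into \eqref{j92x1ly6} shows (recall $\inf\emptyset=\infty$) that \eqref{j92x1ly6} can hold only when $\mathcal{U}\ne\emptyset$; in particular, when $\mathcal{U}=\emptyset$ both $M\cap\mathcal{U}\ne\emptyset$ and \eqref{j92x1ly6} fail. So the plan is to prove ``$\eqref{j92x1ly6}\Rightarrow M\cap\mathcal{U}\ne\emptyset$'', working throughout under the hypothesis $\mathcal{U}\ne\emptyset$. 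I would essentially rerun the Hahn--Banach scheme from the proof of Theorem \ref{h9k33d}, but with the positively homogeneous, subadditive functional
$$p(h)=-\inf_{Q\in\mathcal{U}}Q(h)=\sup_{Q\in\mathcal{U}}Q(-h)\qquad(h\in B(\Omega,\mathcal{A}))$$
playing the role of the dominating functional. Since $\mathcal{U}\ne\emptyset$ and $\abs{Q(h)}\le\norm{h}$ for every $Q\in\mathbb{P}$, this $p$ is finite on $B(\Omega,\mathcal{A})$.

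Concretely: let $D=\{\oplus_{i=1}^nf_i:f_i\in B(\Omega_i,\mathcal{A}_i)\}$ and let $T:D\to\mathbb{R}$, $T(\oplus_if_i)=\sum_i\mu_i(f_i)$, be the well-defined linear functional with $T(1)=1$ constructed in the proof of Theorem \ref{h9k33d}. Condition \eqref{j92x1ly6} says precisely that $-T\le p$ on $D$, so Hahn--Banach produces a linear $L:B(\Omega,\mathcal{A})\to\mathbb{R}$ with $L|_D=T$, $L(1)=1$ and $L(h)\ge-p(h)=\inf_{Q\in\mathcal{U}}Q(h)$ for all $h$; in particular $L\ge 0$. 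Representing the positive normalized functional $L$ by a f.a.p.\ $P$ (the coherence argument used in the proof of Theorem \ref{h9k33d}) gives $P(h)=L(h)$ on $B(\Omega,\mathcal{A})$. Since $1_B\circ\pi_i\in D$, I get $P(\pi_i\in B)=T(1_B\circ\pi_i)=\mu_i(B)$, so $P\in M$. And if $P^*(A)=0$ then $Q(A)=0$ for every $Q\in\mathcal{U}$, so $\inf_{Q\in\mathcal{U}}Q(1_A)=\inf_{Q\in\mathcal{U}}Q(-1_A)=0$; applying $L(\cdot)\ge\inf_{Q\in\mathcal{U}}Q(\cdot)$ to $1_A$ and to $-1_A$ yields $0\le P(A)\le 0$, i.e.\ $P\ll P^*$. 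This disposes of the first assertion, with $P\in M\cap\mathcal{U}$.

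For the second assertion I additionally need $P(\abs{f})<\infty$ and $P(f)=0$ for each $f\in F$, and here \eqref{ad3339jgt} enters. From \eqref{ad3339jgt} one first gets $\sup_{Q\in\mathcal{U}}Q(\abs{f})<\infty$ (choose $k$ with $\sup_{Q\in\mathcal{U}}Q\{\abs{f}1(\abs{f}>k)\}\le 1$, so $Q(\abs{f})\le k+1$ for all $Q\in\mathcal{U}$); since $L(g)\le\sup_{Q\in\mathcal{U}}Q(g)$ for bounded $g$ (apply $L(-g)\ge\inf_{Q}Q(-g)$), this gives $P(\abs{f})=\sup_kL(\abs{f}\wedge k)\le\sup_{Q\in\mathcal{U}}Q(\abs{f})<\infty$. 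For $P(f)=0$ I would truncate: with $f_k=(-k)\vee(f\wedge k)\in B(\Omega,\mathcal{A})$ one has $P(\abs{f-f_k})\to 0$ (as $P(\abs{f})<\infty$), hence $P(f)=\lim_kL(f_k)$; on the other hand every $Q\in\mathcal{U}$ has $Q(f)=0$ and $\abs{Q(f_k)}=\abs{Q(f_k-f)}\le Q\{\abs{f}1(\abs{f}>k)\}$, so $\abs{L(f_k)}\le\sup_{Q\in\mathcal{U}}\abs{Q(f_k)}\le\sup_{Q\in\mathcal{U}}Q\{\abs{f}1(\abs{f}>k)\}\to 0$ by \eqref{ad3339jgt}. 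Therefore $P(f)=0$ for all $f\in F$, and $P\in M\cap\mathcal{U}$.

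I expect the only genuine obstacle to be this last step. Hahn--Banach controls the extended functional $L$ only on $B(\Omega,\mathcal{A})$, whereas in the second case the constraints defining $\mathcal{U}$ involve possibly unbounded $f\in F$; condition \eqref{ad3339jgt} is exactly the uniform integrability that lets the truncation argument transfer $Q(f)=0$ and $Q(\abs{f})<\infty$, uniformly over $\mathcal{U}$, to the limit functional $P$. Everything else (subadditivity of $p$, the well-definedness of $T$, and the representation of positive normalized functionals by f.a.p.'s) is routine or already available from the proof of Theorem \ref{h9k33d}.
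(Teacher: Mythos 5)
Your proof is correct, and it reaches the conclusion by a genuinely different mechanism than the paper. The paper's proof does not extend $T$ directly: from $\sup_{Q\in\mathcal{U}}X_f(Q)\ge 0$ (where $X_f(Q)=\sum_i\mu_i(f_i)-Q(f)$) it invokes the coherence result of \cite[Sect.~2]{BR2021} to produce a finitely additive \emph{mixing} probability $P_\mathcal{U}$ on the power set of $\mathcal{U}$ with $\int X_f\,dP_\mathcal{U}=0$ for all $f\in D$, and then defines $P$ as the barycenter $P(A)=\int Q(A)\,P_\mathcal{U}(dQ)$. You instead run Hahn--Banach directly on $B(\Omega,\mathcal{A})$ with the sublinear majorant $p(h)=\sup_{Q\in\mathcal{U}}Q(-h)$, obtaining the sandwich $\inf_{Q\in\mathcal{U}}Q(h)\le P(h)\le\sup_{Q\in\mathcal{U}}Q(h)$ for bounded $h$; this sandwich plays exactly the role that the integral representation $P(g)=\int Q(g)\,P_\mathcal{U}(dQ)$ plays in the paper (for $P\ll P^*$, for $P(\abs{f})<\infty$, and for transferring $Q(f)=0$ via truncation and \eqref{ad3339jgt}). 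The paper's route buys the stronger structural fact that $P$ is a mixture of elements of $\mathcal{U}$ (which is what gets reused in Theorem \ref{thmsadd444}, where $P_0$ must be defined on the field $\mathcal{R}$ and shown $\sigma$-additive); your route is more self-contained and avoids the auxiliary measure on $2^{\mathcal{U}}$, at the cost of only controlling $P$ on bounded functions --- which, as you correctly observe, is exactly the point where \eqref{ad3339jgt} is needed. Two small remarks: your handling of the case $\mathcal{U}=\emptyset$ (via $f_i\equiv 0$ and $\inf\emptyset=\infty$) matches the paper's convention; and your step $P(f)=\lim_kL(f_k)$ needs no appeal to $P(\abs{f-f_k})\to 0$, since $L(f_k)=P(f^+\wedge k)-P(f^-\wedge k)\rightarrow P(f^+)-P(f^-)=P(f)$ directly from the definition of the finitely additive integral in Subsection \ref{e4n8h5} (the fact you invoke is true, but proving $P\{\abs{f}1(\abs{f}>k)\}\to 0$ for a merely finitely additive $P$ takes an extra Markov-inequality argument that you can simply avoid).
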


\begin{proof}
Let $\mathcal{U}\subset\mathbb{P}$. As proved above, condition \eqref{j92x1ly6} holds if $M\cap\mathcal{U}\ne\emptyset$. Hence, suppose condition \eqref{j92x1ly6} holds. As in the proof of Theorem \ref{h9k33d}, define $D$ to be the collection of functions $f$ of the form $f=\oplus_{i=1}^nf_i$ where $f_i\in B(\Omega_i,\mathcal{A}_i)$ for each $i\in I$. For $f=\oplus_{i=1}^nf_i\in D$, let
$$X_f(Q)=\sum_{i=1}^n\mu_i(f_i)-Q(f)\quad\quad\text{for all }Q\in\mathcal{U}.$$
Then, $X_f$ is a bounded function on $\mathcal{U}$ and $\bigl\{X_f:f\in D\bigr\}$ is a linear space. Moreover, for fixed $f\in D$, condition \eqref{j92x1ly6} yields
$$\sup_{Q\in\mathcal{U}}X_f(Q)=\sum_{i=1}^n\mu_i(f_i)-\inf_{Q\in\mathcal{U}}Q(f)\ge 0.$$
In turn, the above condition implies the existence of a f.a.p. $P_\mathcal{U}$, defined on the power set of $\mathcal{U}$, such that
$$\int X_f(Q)\,P_\mathcal{U}(dQ)=0\quad\quad\text{for each }f\in D$$
or equivalently
\begin{gather}\label{g71vf5t}
\int Q\bigl(\oplus_{i=1}^nf_i\bigr)\,P_\mathcal{U}(dQ)=\sum_{i=1}^n\mu_i(f_i)\quad\text{if }f_i\in B(\Omega_i,\mathcal{A}_i)\text{ for all }i\in I;
\end{gather}
see \cite[Sect. 2]{BR2021}. Using $P_\mathcal{U}$, define
$$P(A)=\int  Q(A)\,P_\mathcal{U}(dQ)\quad\quad\text{for all }A\in\mathcal{A}.$$
Then, $P$ is a f.a.p. on $\mathcal{A}$. For each $i\in I$ and each $B\in\mathcal{A}_i$, equation \eqref{g71vf5t} yields
$$P(\pi_i\in B)=\int  Q(\pi_i\in B)\,P_\mathcal{U}(dQ)=\mu_i(B).$$
Therefore, $P\in M$. In addition, $P\in\mathcal{U}$ if $\mathcal{U}=\bigl\{Q\in\mathbb{P}:Q\ll P^*\bigr\}$. In this case, in fact, $Q(A)=0$ for each $Q\in\mathcal{U}$ whenever $A\in\mathcal{A}$ and $P^*(A)=0$. Hence, one trivially obtains $P\ll P^*$.

\medskip

\noindent It remains to see that $P\in\mathcal{U}$ if condition \eqref{ad3339jgt} holds and
$$\mathcal{U}=\bigl\{Q\in\mathbb{P}:Q\ll P^*,\,\,\,Q(\abs{f})<\infty\text{ and }Q(f)=0\text{ for each }f\in F\bigr\}.$$
Arguing as above, it is obvious that $P\ll P^*$. Hence, we have to see that $P(\abs{f})<\infty$ and $P(f)=0$ for each $f\in F$. If $g\in B(\Omega,\mathcal{A})$, since $g$ is the uniform limit of a sequence $g_k$ of simple functions, one obtains
$$P(g)=\lim_kP(g_k)=\lim_k\int Q(g_k)\,P_\mathcal{U}(dQ)=\int Q(g)\,P_\mathcal{U}(dQ).$$
Having noted this fact, fix $f\in F$. For each integer $j\ge 1$,
\begin{gather*}
P(\abs{f})\le j+P\bigl\{\abs{f}\,1(\abs{f}>j)\bigr\}=j+\sup_kP\bigl\{\abs{f}\wedge k\,1(\abs{f}>j)\bigr\}
\\=j+\sup_k\int Q\bigl\{\abs{f}\wedge k\,1(\abs{f}>j)\bigr\}\,P_\mathcal{U}(dQ)\le j+\sup_{Q\in\mathcal{U}}Q\bigl\{\abs{f}\,1(\abs{f}>j)\bigr\}.
\end{gather*}
Hence, $P(\abs{f})<\infty$ follows from condition \eqref{ad3339jgt}. Similarly,
$$P(f)=\lim_kP\bigl\{f\,1(\abs{f}\le k)\bigr\}=\lim_k\int Q\bigl\{f\,1(\abs{f}\le k)\bigr\}\,P_\mathcal{U}(dQ).$$
For every $Q\in\mathcal{U}$, since $Q(f)=0$, one obtains
$$Q\bigl\{f\,1(\abs{f}\le k)\bigr\}=-Q\bigl\{f\,1(\abs{f}>k)\bigr\}.$$
Therefore, condition \eqref{ad3339jgt} implies again
$$P(f)=\lim_k\int Q\bigl\{-f\,1(\abs{f}>k)\bigr\}\,P_\mathcal{U}(dQ)\le\lim_k\sup_{Q\in\mathcal{U}}Q\bigl\{\abs{f}\,1(\abs{f}>k)\bigr\}=0.$$
This proves that $P(f)\le 0$. Replacing $f$ with $-f$, one also obtains $P(f)\ge 0$. Thus, $P(f)=0$ and this concludes the proof.
\end{proof}

\medskip

\noindent Theorem \ref{f6h99k33ws} applies to martingale transport.

\medskip

\begin{cor}\label{gb9971s}
Let $P^*\in\mathbb{P}$ and $(\Omega_i,\mathcal{A}_i)=(\mathbb{R},\mathcal{B}(\mathbb{R}))$ for all $i\in I$. Suppose that $P^*(A)=1$ for some compact set $A\subset\mathbb{R}^n$. Then, condition \eqref{j92x1ly6} is equivalent to $M\cap\mathcal{U}\ne\emptyset$ where
\begin{gather*}
\mathcal{U}=\bigl\{Q\in\mathbb{P}:Q\ll P^*\text{ and }(\pi_1,\ldots,\pi_n)\text{ is a }Q\text{-martingale}\bigr\}.
\end{gather*}
\end{cor}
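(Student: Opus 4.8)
The plan is to deduce Corollary \ref{gb9971s} from Theorem \ref{f6h99k33ws} by verifying the hypotheses of the second part of that theorem in the martingale setting. Concretely, I would take
$$F=H=\{\pi_1\}\cup\bigl\{(\pi_{i+1}-\pi_i)\,g(\pi_1,\ldots,\pi_i):1\le i<n,\,g\in B(\mathbb{R}^i,\mathcal{B}(\mathbb{R}^i))\bigr\},$$
so that, by the discussion in Subsection \ref{nj887c4d6h}, the set $\mathcal{U}$ in the statement coincides with $\{Q\in\mathbb{P}:Q\ll P^*,\,Q(\abs f)<\infty\text{ and }Q(f)=0\text{ for each }f\in F\}$, which is exactly the form of $\mathcal{U}$ considered in the second assertion of Theorem \ref{f6h99k33ws}. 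Given this identification, Theorem \ref{f6h99k33ws} yields the equivalence of \eqref{j92x1ly6} with $M\cap\mathcal{U}\ne\emptyset$ \emph{provided} the uniform-integrability condition \eqref{ad3339jgt} holds for every $f\in H$. So the entire content of the proof reduces to checking \eqref{ad3339jgt}.

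The key step, and the one I expect to carry the weight of the argument, is the verification of \eqref{ad3339jgt}, and this is where the compactness hypothesis on $P^*$ enters decisively. Since $A\subset\mathbb{R}^n$ is compact, there is a constant $R$ with $A\subset[-R,R]^n$, and every $Q\in\mathcal{U}$ satisfies $Q\ll P^*$, hence $Q(A)=1$, hence $Q(\abs{\pi_i}\le R)=1$ for all $i\in I$. Therefore each generator of $H$ is, $Q$-almost surely, a \emph{bounded} function: if $f=\pi_1$ then $\abs f\le R$ $Q$-a.s.; and if $f=(\pi_{i+1}-\pi_i)\,g(\pi_1,\ldots,\pi_i)$ with $\sup\abs g\le c_g$, then $\abs f\le 2Rc_g$ $Q$-a.s. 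Consequently $Q\{\abs f\,1(\abs f>k)\}=0$ for every $Q\in\mathcal{U}$ as soon as $k$ exceeds this uniform bound (which does not depend on $Q$), so $\sup_{Q\in\mathcal{U}}Q\{\abs f\,1(\abs f>k)\}=0$ for all large $k$ and \eqref{ad3339jgt} holds trivially. One small point to handle carefully: the bound $\abs f\le 2Rc_g$ holds only on $A$, a set of $Q$-measure one, so I would note that for a non-negative measurable $h$ with $Q(h>c)=0$ one has $Q(h)\le c$ by the definition \eqref{n6t4} of the integral, applied to $h=\abs f\,1(\abs f>k)$, which vanishes $Q$-a.s.

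With \eqref{ad3339jgt} established, the corollary follows immediately from the second part of Theorem \ref{f6h99k33ws}. I would also remark that no measurability or tightness assumption on the $\mu_i$ is needed here, and that the compactness of $A$ is used \emph{only} to force the martingale test functions in $H$ to be essentially bounded under every admissible $Q$; without such a hypothesis the differences $\pi_{i+1}-\pi_i$ need not be uniformly integrable over $\mathcal{U}$ and the reduction to Theorem \ref{f6h99k33ws} would break down. The main obstacle is thus not any deep argument but rather the bookkeeping of almost-sure versus everywhere bounds and the identification of $\mathcal{U}$ with the corresponding $\mathcal{K}(H)$-type set; once these are in place the proof is a direct citation of Theorem \ref{f6h99k33ws}.
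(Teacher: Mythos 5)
Your proposal is correct and follows essentially the same route as the paper: reduce to the second part of Theorem \ref{f6h99k33ws} with $F=H$, then use the compact support of $P^*$ (hence of every $Q\ll P^*$) to conclude that each $f\in H$ is $Q$-essentially bounded uniformly over $\mathcal{U}$, so that condition \eqref{ad3339jgt} holds trivially. The paper's only cosmetic difference is that it bounds the projections via the compact set $B=\bigcup_i\pi_i(A)$ rather than a cube $[-R,R]^n$, and your explicit remark on passing from a $Q$-a.s.\ bound to $Q\{\abs{f}\,1(\abs{f}>k)\}=0$ via the definition \eqref{n6t4} is a correct elaboration of a step the paper leaves implicit.
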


\begin{proof}
The collection $\mathcal{U}$ can be written as in the second part of Theorem \ref{f6h99k33ws} with $F=H$, where $H$ has been introduced in Subsection \ref{nj887c4d6h}. Hence, it suffices to check condition \eqref{ad3339jgt} with $F=H$. Let
$$B=\bigcup_{i=1}^n\pi_i(A)=\bigl\{\pi_i(x):i\in I,\,x\in A\bigr\}.$$
Then, $B$ is a compact subset of $\mathbb{R}$ and
$$P^*\bigl(\pi_i\in B\text{ for each }i\in I\bigr)\ge P^*(A)=1.$$
Take $k$ such that $P^*\bigl(\max_i\abs{\pi_i}\le k\bigr)=1$. Since $Q\ll P^*$ for each $Q\in\mathcal{U}$,
$$Q(\abs{\pi_1}>k)=Q(\abs{\pi_{i+1}-\pi_i}>2k)=0\quad\quad\text{for all }Q\in\mathcal{U}\text{ and }1\le i<n.$$
Fix now $f\in H$. If $f=\pi_1$, then $Q\bigl\{\abs{\pi_1}\,1(\abs{\pi_1}>k)\bigr\}=0$ for each $Q\in\mathcal{U}$. If
$$f=\bigl(\pi_{i+1}-\pi_i\bigr)\,g(\pi_1,\ldots,\pi_i),$$
for some $i$ and $g\in B(\mathbb{R}^i,\mathcal{B}(\mathbb{R}^i))$, take an integer $j\ge 2\,k\,\sup\abs{g}$ and note that
$$Q\bigl\{\abs{f}\,1\bigl(\abs{f}>j\bigr)\bigr\}\le\sup\abs{g}\,Q\bigl\{\abs{\pi_{i+1}-\pi_i}\,1(\abs{\pi_{i+1}-\pi_i}>2k)\bigr\}=0$$
for each $Q\in\mathcal{U}$. Hence, condition \eqref{ad3339jgt} holds.
\end{proof}

\medskip

\noindent The same argument used for Theorem \ref{f6h99k33ws} and Corollary \ref{gb9971s} can be applied to prove the existence of an absolutely continuous, martingale, {\em probability measure} with given marginals. More precisely, our last result provides conditions for the existence of $P\in\Gamma$ (so that $P$ is a probability measure on $\mathcal{A}$ with marginals $\mu_1,\ldots,\mu_n$) such that $P\ll P^*$ and $(\pi_1,\ldots,\pi_n)$ is a $P$-martingale.

\medskip

\begin{thm}\label{thmsadd444}
Let $P^*$ be a probability measure on $\mathcal{A}$ and $(\Omega_i,\mathcal{A}_i)=(\mathbb{R},\mathcal{B}(\mathbb{R}))$ for all $i\in I$. Suppose $P^*$ has discrete marginals, except possibly one, and $P^*(A)=1$ for some compact $A\subset\mathbb{R}^n$. Then, condition \eqref{j92x1ly6} is equivalent to $\Gamma\cap\mathcal{U}\ne\emptyset$ where
\begin{gather*}
\mathcal{U}=\bigl\{Q\in\mathbb{P}:Q\ll P^*\text{ and }(\pi_1,\ldots,\pi_n)\text{ is a }Q\text{-martingale}\bigr\}.
\end{gather*}
\end{thm}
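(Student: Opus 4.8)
The plan is to mimic the proof of Corollary~\ref{gb9971s}, using Theorem~\ref{f6h99k33ws} to produce a finitely additive $P\in M\cap\mathcal{U}$, and then to \emph{upgrade} this $P$ to a genuine probability measure by exploiting the discreteness hypothesis on the marginals of $P^*$. Since the implication ``$\Gamma\cap\mathcal{U}\ne\emptyset\Rightarrow$ \eqref{j92x1ly6}'' is immediate (if $P\in\Gamma\cap\mathcal{U}$ then $P\in M\cap\mathcal{U}$ and the displayed inequality preceding \eqref{j92x1ly6} applies), only the converse needs work. So assume \eqref{j92x1ly6} holds. By Corollary~\ref{gb9971s} (whose hypotheses are in force, as $P^*$ is a probability measure with compact support), there is a f.a.p.\ $P\in M\cap\mathcal{U}$: that is, $P\ll P^*$, $P$ has marginals $\mu_1,\dots,\mu_n$, and $(\pi_1,\dots,\pi_n)$ is a $P$-martingale. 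The remaining task is to replace $P$ by a $\sigma$-additive member of $\mathcal{U}$ with the same marginals.

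The key structural observation is this. Relabel so that $\pi_n$ is the (possibly) non-discrete coordinate, and let $E=\{x_1,x_2,\dots\}\subset\mathbb{R}^{n-1}$ be a countable set carrying the $\sigma$-additive marginal $P^*\circ(\pi_1,\dots,\pi_{n-1})^{-1}$; by $P\ll P^*$ the f.a.p.\ $P$ is concentrated, in the finitely additive sense, on $\{(\pi_1,\dots,\pi_{n-1})\in E\}$, and moreover the compact-support hypothesis confines everything to the compact set $A$. Write $p_j=P\bigl((\pi_1,\dots,\pi_{n-1})=x_j\bigr)$; since each singleton $\{x_j\}$ lies in the discrete $\sigma$-field, $\sum_j p_j=1$ and the numbers $p_j$ already match the $\sigma$-additive law of $(\pi_1,\dots,\pi_{n-1})$, hence are consistent with $\mu_1,\dots,\mu_{n-1}$ on the relevant atoms. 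For each $j$ with $p_j>0$, disintegrate: let $\nu_j$ be the conditional f.a.p.\ for $\pi_n$ given $(\pi_1,\dots,\pi_{n-1})=x_j$, so $\nu_j$ is a f.a.p.\ on $\mathcal{B}(\mathbb{R})$ supported in a fixed compact interval (a projection of $A$), and the martingale property of $P$ says $\int y\,\nu_j(dy)$ equals the appropriate barycentre dictated by the previous coordinate of $x_j$. Now replace each $\nu_j$ by its ``$\sigma$-additivization'' $\bar\nu_j$, i.e.\ a Borel probability measure on that compact interval with $\int g\,d\bar\nu_j=\int g\,d\nu_j$ for all $g\in B(\mathbb{R},\mathcal{B}(\mathbb{R}))$; such $\bar\nu_j$ exists because a f.a.p.\ on the Borel sets of a compact metric space agrees, on bounded Borel functions, with a unique Radon probability measure (integrate against continuous functions and invoke Riesz representation, then check Borel sets). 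Set $\bar P=\sum_j p_j\,\delta_{x_j}\otimes\bar\nu_j$, a $\sigma$-additive probability measure on $\mathcal{A}$.

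It then remains to verify that $\bar P\in\Gamma\cap\mathcal{U}$. First, $\bar P$ agrees with $P$ on every bounded $\mathcal{A}$-measurable function of the form $h\bigl((\pi_1,\dots,\pi_{n-1})\bigr)g(\pi_n)$, hence (by a monotone-class / uniform-approximation argument, using that $\bar P$ is $\sigma$-additive and that bounded measurable functions on $A$ are uniform limits of sums of such products off a $P$-null set) $\bar P=P$ on all of $B(\Omega,\mathcal{A})$; in particular $\bar P$ has marginals $\mu_1,\dots,\mu_n$, so $\bar P\in\Gamma$. Second, $\bar P\ll P^*$: any $P^*$-null $\mathcal{A}$-set has $\bar P$-measure equal to its $P$-measure, which is $0$. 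Third, $(\pi_1,\dots,\pi_n)$ is a $\bar P$-martingale: conditions (a) and (c) of Subsection~\ref{nj887c4d6h} are statements about integrals of the bounded functions in $H$ (after truncation, which is harmless since everything sits on the compact $A$), and these integrals are unchanged in passing from $P$ to $\bar P$. Hence $\bar P\in\Gamma\cap\mathcal{U}$.

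I expect the main obstacle to be the disintegration step and the justification that $\bar P$ and $P$ agree on all bounded measurable functions. The subtlety is that a f.a.p.\ need not disintegrate in the usual $\sigma$-additive sense; what rescues the argument is that the conditioning $\sigma$-field on $(\pi_1,\dots,\pi_{n-1})$ is \emph{atomic} (discrete marginal), so conditioning reduces to ordinary division by the atom masses $p_j$ with no measurability pathology, and each conditional piece lives on a compact interval where finitely additive and Radon probabilities coincide on bounded Borel functions. One must be a little careful that only one coordinate is allowed to be non-discrete — the product $\delta_{x_j}\otimes\bar\nu_j$ construction is exactly what uses this, and it is also where the martingale (barycentre) condition, inherited from $P$, guarantees that the $\bar\nu_j$ have the right means so that $\bar P$ is again a martingale law.
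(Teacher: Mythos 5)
Your reduction of the converse implication to Corollary~\ref{gb9971s} plus an ``upgrade to $\sigma$-additivity'' is a reasonable strategy, but the upgrade as you have written it rests on a false claim. You assert that a f.a.p.\ $\nu_j$ on the Borel sets of a compact interval ``agrees, on bounded Borel functions, with a unique Radon probability measure'' obtained via Riesz representation. This is exactly what fails in the finitely additive world: take $\nu$ on $\mathcal{B}([0,1])$ with $\nu((0,\epsilon))=1$ for every $\epsilon>0$ but $\nu(\{0\})=0$ (a diffuse point mass at $0$). Its Riesz representative is $\delta_0$, and the two disagree already on the Borel set $\{0\}$. Agreement holds on \emph{continuous} functions only. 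Everything downstream inherits this gap: the claim that $\bar P=P$ on all of $B(\Omega,\mathcal{A})$ cannot be obtained by a monotone-class argument (monotone class needs countable additivity of $P$, which is the very thing being proved), and the claim $\bar P\ll P^*$ fails outright for your $\bar\nu_j$, since $\nu_j(B)=0$ does not imply $\bar\nu_j(B)=0$ (same counterexample). Note also that if $\bar P$ really did agree with $P$ on all of $\mathcal{A}$, then $P$ would already be $\sigma$-additive — so your argument is implicitly asserting that the f.a.p.\ from Corollary~\ref{gb9971s} is automatically a measure, which is a nontrivial fact requiring its own proof. (It happens to be true here, but for a different reason: with the atoms $A_j=\{(\pi_1,\ldots,\pi_{n-1})=x_j\}$ one gets $\sum_j p_j=1$ from discreteness of $\mu_1,\ldots,\mu_{n-1}$, whence $P(C)=\sum_j p_j\nu_j(C_{x_j})$ for all $C$ by a squeeze argument, so $\mu_n=\sum_j p_j\nu_j$; since $\mu_n$ is $\sigma$-additive and dominates each $p_j\nu_j$, every $\nu_j$ with $p_j>0$ is continuous from above at $\emptyset$, hence already $\sigma$-additive — no Riesz step is needed or possible. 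Your write-up does not contain this argument.)

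For comparison, the paper avoids conditioning altogether. It takes the mixture $P_0(A)=\int Q(A)\,P_{\mathcal{U}}(dQ)$ from the proof of Theorem~\ref{f6h99k33ws} but defines it only on the field $\mathcal{R}$ generated by measurable rectangles; since $P_0$ has the $\sigma$-additive (and perfect) marginals $\mu_1,\ldots,\mu_n$, a theorem of Ramachandran \cite{R1996} forces $P_0$ to be $\sigma$-additive on $\mathcal{R}$, and it then extends to $P\in\Gamma$. The discreteness hypothesis enters only at the end, via Lemma~4 of \cite{BPRSPIZ2015}, to upgrade ``$P(A)=0$ whenever $A\in\mathcal{R}$ and $P^*(A)=0$'' to genuine absolute continuity $P\ll P^*$ on all of $\mathcal{A}$; the martingale identities are transferred through $\mathcal{R}$-continuous functions. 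So both the measure-theoretic upgrade and the absolute-continuity upgrade are handled by cited results rather than by disintegration, and your proposal would need the repair sketched above to close its gap.
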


\begin{proof}
If $P\in\Gamma\cap\mathcal{U}$, then $P\in M\cap\mathcal{U}$, and condition \eqref{j92x1ly6} follows exactly as above. Conversely, assume condition \eqref{j92x1ly6}. Let us adopt the same notation as in the proof of Theorem \ref{f6h99k33ws}. Arguing as in such a proof, because of \eqref{j92x1ly6}, there is a f.a.p. $P_\mathcal{U}$, defined on the power set of $\mathcal{U}$, satisfying equation \eqref{g71vf5t}.

\medskip

\noindent Let $\mathcal{R}$ be the field on $\Omega$ generated by the measurable rectangles $A_1\times\ldots\times A_n$, where $A_i\in\mathcal{A}_i$ for each $i\in I$. Define
$$P_0(A)=\int Q(A)\,P_\mathcal{U}(dQ)\quad\quad\text{for all }A\in\mathcal{R}.$$
(Note that $P_0$ has been defined only on $\mathcal{R}$, not on all of $\mathcal{A}$). Because of \eqref{g71vf5t},
$$P_0(\pi_i\in B)=\mu_i(B)\quad\quad\text{for all }i\in I\text{ and }B\in\mathcal{A}_i.$$
Hence, $\mu_1,\ldots,\mu_n$ are the marginals of $P_0$. Since $\mu_1,\ldots,\mu_n$ are all $\sigma$-additive and perfect, it follows that $P_0$ is $\sigma$-additive as well; see e.g. \cite{R1996}. Let $P$ be the (only) $\sigma$-additive extension of $P_0$ to $\mathcal{A}=\sigma(\mathcal{R})$. Then, $P$ is a probability measure on $\mathcal{A}$ with marginals $\mu_1,\ldots,\mu_n$, namely, $P\in\Gamma$. Furthermore,
\begin{gather}\label{h8c4k087j2}
P(A)=P_0(A)=0\quad\text{whenever}\quad A\in\mathcal{R}\text{ and }P^*(A)=0.
\end{gather}
By Lemma 4 of \cite{BPRSPIZ2015}, since $P^*$ is a probability measure and all but one its marginals are discrete, condition \eqref{h8c4k087j2} implies $P\ll P^*$.

\medskip

\noindent It remains to see that $(\pi_1,\ldots,\pi_n)$ is a $P$-martingale, namely, $P(\abs{f})<\infty$ and $P(f)=0$ for all $f\in H$ where $H$ has been introduced in Subsection \ref{nj887c4d6h}. Since $P^*$ has compact support, there is $k$ such that $P^*\bigl(\max_i\abs{\pi_i}\le k\bigr)=1$. Since $P\ll P^*$ and $Q\ll P^*$ for each $Q\in\mathcal{U}$,
$$P\bigl(\max_i\abs{\pi_i}\le k\bigr)=Q\bigl(\max_i\abs{\pi_i}\le k\bigr)=1\quad\quad\text{for each }Q\in\mathcal{U}.$$
In particular, $P(\abs{f})<\infty$ for all $f\in H$. Let $C(\mathcal{R})$ be the class of those functions $f:\Omega\rightarrow\mathbb{R}$ such that $f_k\rightarrow f$ {\em uniformly} for some sequence $f_k$ of $\mathcal{R}$-simple functions. (Such functions are called $\mathcal{R}$-continuous in \cite{BRBR}). If $f\in C(\mathcal{R})$,
$$P(f)=\lim_kP(f_k)=\lim_kP_0(f_k)=\lim_k\int Q(f_k)\,P_\mathcal{U}(dQ)=\int Q(f)\,P_\mathcal{U}(dQ)$$
where $f_k$ is any sequence of $\mathcal{R}$-simple functions such that $f_k\rightarrow f$ uniformly. Since $\pi_1\,1(\abs{\pi_1}\le k)\in C(\mathcal{R})$ and $Q(\pi_1)=0$ for all $Q\in\mathcal{U}$, it follows that
$$P(\pi_1)=P\bigl\{\pi_1\,1(\abs{\pi_1}\le k)\bigr\}=\int Q\bigl\{\pi_1\,1(\abs{\pi_1}\le k)\bigr\}\,P_\mathcal{U}(dQ)=\int Q(\pi_1)\,P_\mathcal{U}(dQ)=0.$$
Suppose now that $f$ is of the form
\begin{gather}\label{gh71zzd4kn79}
f=\bigl(\pi_{i+1}-\pi_i\bigr)\,1(\pi_1\in A_1)\,\ldots\,1(\pi_i\in A_i),
\end{gather}
where $1\le i<n$ and $A_j\in\mathcal{A}_j$ for $j=1,\ldots,i$. On noting that
$$\tilde{f}:=f\,1\bigl(\abs{\pi_i}\le k)\,1\bigl(\abs{\pi_{i+1}}\le k)\in C(\mathcal{R}),$$
one obtains
\begin{gather*}
P(f)=P(\tilde{f})=\int Q(\tilde{f})\,P_\mathcal{U}(dQ)=\int Q(f)\,P_\mathcal{U}(dQ)=0.
\end{gather*}
To sum up, $P(\pi_1)=0$ and $P(f)=0$ for all $f$ as in \eqref{gh71zzd4kn79}. Since $P$ is a probability measure, this implies $P(f)=0$ for all $f\in H$.
\end{proof}

\medskip

\noindent We conclude this paper with two remarks.

\medskip

\noindent Firstly, it would be nice to have an analogous of Theorem \ref{thmsadd444}, as well as of the other results of this paper, with $P\sim P^*$ in the place of $P\ll P^*$, where $P\sim P^*$ means $P\ll P^*$ and $P^*\ll P$. To this end, however, the techniques of this paper seem not to work. A few partial results are in \cite{BPR2013}, \cite{BPR2015}, \cite{BPRSPIZ2015}.

\medskip

\noindent Secondly, the class $\mathcal{U}$ involved in Corollary \ref{gb9971s} and Theorem \ref{thmsadd444} may be empty. Let us consider the case where $P^*$ is a probability measure. Then, by \cite{DMW}, one obtains $\mathcal{U}\ne\emptyset$ if $P^*$ satisfies the {\em non-arbitrage} condition
$$P^*(f\ge 0)=1\quad\Rightarrow\quad P^*(f=0)=1\quad\quad\text{for each }f\text{ in the linear span of }H.$$
The non-arbitrage condition, however, is stronger than $\mathcal{U}\ne\emptyset$. In fact, it implies the existence of a probability measure $P$ on $\mathcal{A}$ such that $P\sim P^*$ and $(\pi_1,\ldots,\pi_n)$ is a $P$-martingale. Assuming $P^*$ has compact support, a characterization of $\mathcal{U}\ne\emptyset$ is provided by \cite[Theo. 3]{BPR2013}. According to the latter, $\mathcal{U}\ne\emptyset$ if and only if
$$P^*(f>a)>0\quad\text{ for all }a<0\text{ and all }f\text{ in the linear span of }H.$$

\end{document}